\newtheorem{theorem}{Theorem}[section]
\newtheorem{proposition}[theorem]{Proposition}
\newtheorem{corollary}[theorem]{Corollary}
\theoremstyle{definition}
\newtheorem{definition}[theorem]{Definition}
\newtheorem{example}[theorem]{Example}
\theoremstyle{remark}
\numberwithin{equation}{section}
\begin{document}
	
	\setcounter{page}{1}
	
	\title[Pairs of  Woven continuous frames in Hilbert spaces]{Pairs of  Woven continuous frames in Hilbert spaces}

	\author[H. Massit, M. Rossafi, S. Kabbaj]{Hafida Massit$^{1}$, Mohamed Rossafi$^{2*}$ and Samir Kabbaj$^{3}$}
	
	\address{$^{1}$Department of Mathematics, University of Ibn Tofail, Kenitra, Morocco}
	\email{\textcolor[rgb]{0.00,0.00,0.84}{massithafida@yahoo.fr}}
	\address{$^{2}$LaSMA Laboratory Department of Mathematics Faculty of Sciences Dhar El Mahraz, University Sidi Mohamed Ben Abdellah, Fes, Morocco}
	\email{\textcolor[rgb]{0.00,0.00,0.84}{rossafimohamed@gmail.com}}
	\address{$^{3}$Department of Mathematics, University of Ibn Tofail, Kenitra, Morocco}
	\email{\textcolor[rgb]{0.00,0.00,0.84}{samkabbaj@yahoo.fr}}

	\subjclass[2010]{42C15.}
	
	\keywords{Continous frames, Weaving Hilbert space frames, Duality principle.}
	
	\date{
		\newline \indent $^{*}$Corresponding author}

	\begin{abstract} In this present paper we introduce weaving Hilbert space frames in the continuous case, we give new approaches for manufacturing pairs of woven continuous frames and we obtain new properties in continuous weaving frame theory related to dual frames. Also, we provide some approaches for constructing  weaving continuous frames by using small perturbations.  
	\end{abstract}
	\maketitle
	\section{Introduction }
	
The concept of frames in Hilbert spaces has been introduced by Duffin and Schaffer \cite{Duf} in 1952 to study some deep problems in nonharmonic Fourier series, after the fundamental paper \cite{DGM} by Daubechies, Grossman and Meyer, frame theory began to be widely used, particularly in the more specialized context of wavelet frames and Gabor frames. Continuous frames defined by Ali, Antoine and Gazeau \cite{GAZ}. Gabrado and Han in \cite{GHN} called these frames associated with measurable spaces. For more about frames see \cite{FR1, RFDCA, r1, r3, r5, r6}.

Recently, Bemrose et al.\cite{BE} has introduced a new concept of weaving frames in separable Hilbert space. This is motivated by a problem regarding distributed signal processing where frames plays an important role. Weaving frames has potential applications in wireless sensor networks that require distributed processing under different frames. The fundamental properties of weaving frames were reviewed by Casazza and Lynch in \cite{Cas}. Weaving frames were further studied by Casazza, Freeman and Lynch \cite{P.G}.
 
In this paper, we give new basic properties of weaving continuous frames related to dual frames to survey under which conditions a continuous frame with its dual constitute woven continuous frames, and we give some approaches for constructing concrete pairs of woven continuous frames.
 
	\section{preliminaries}
	
	Throughout this paper, we suppose $ \mathcal{H} $ is a separable Hilbert space, $ \mathcal{H}_{m} $ an $ m- $ dimensional Hilbert space, $ I $ the identity operator on $ \mathcal{H} $,
	 $(\mathfrak{A},\mu)$ be a measure space with positive measure $\mu$. 
	
	A family of vectors $F= \{F_{\varsigma}\}_{\varsigma\in \mathfrak{A}} $ in a separable Hilbert $ \mathcal{H}$ is called a Riesz basis if $ \overline{span}\{F_{\varsigma}\}_{\varsigma\in \mathfrak{A}} = \mathcal{H} $ and there exist constants $ 0< A_{F}\leq B_{F} <\infty $, such that
	
	 \begin{equation*}
		A_{F}\int_{\mathfrak{A}} \vert\alpha_{\varsigma}\vert^{2}d\mu(\varsigma)\leq \Vert \int_{\mathfrak{A}} \alpha_{\varsigma} F_{\varsigma} d\mu(\varsigma) \Vert^{2}\leq B_{F}\int_{\mathfrak{A}} \vert\alpha_{\varsigma}\vert^{2}d\mu(\varsigma),\;\forall \{\alpha_{\varsigma}\}_{\varsigma}\in L^{2}(\mathfrak{A}).
	\end{equation*}
The constants $ A_{F} $ and $ B_{F} $ are called Riesz basis bounds.

A family of vectors $F= \{F_{\varsigma}\}_{\varsigma\in \mathfrak{A}} $ in a separable Hilbert $ \mathcal{H}$ is said to be a continuous frame if there exist constants $ 0< A_{F}\leq B_{F} <\infty $, such that 

 \begin{equation}\label{eq1}
	A_{F} \Vert f\Vert^{2}\leq \Vert \int_{\mathfrak{A}} \vert \langle f,  F_{\varsigma} \rangle \vert^{2} d\mu(\varsigma) \leq B_{F} \Vert f\Vert^{2}, \;\forall f\in\mathcal{H},
\end{equation}
then the constants $ A_{F} $ and $ B_{F} $ are called frame bounds. 

The family $ \{F_{\varsigma}\}_{\varsigma \in \mathfrak{A}} $ is said to be a Bessel sequence whenever in \ref{eq1}, the right hand side holds. In the case of $ A_{F}=  B_{F}=1  $, $ \{F_{\varsigma}\}_{\varsigma \in \mathfrak{A}} $  called a Parseval frame. And if $ A_{F}=  B_{F}  $ it is called a tight frame.

Given a frame $ F=\{F_{\varsigma}\} $, the frame operator is defined by
\begin{equation*}
	S_{F}f= \int_{\mathfrak{A}}\langle f,F_{\varsigma}\rangle F_{\varsigma}d\mu(\varsigma).
\end{equation*} 
It is a bounded, invertible, and self-adjoint operator. Also, the synthesis operator  $ T_{F}: L^{2}(\mathfrak{A},\mu) \rightarrow \mathcal{H}$ defined by 
$ T_{F}(f)= \int_{\mathfrak{A}}  f(\varsigma) F(\varsigma) d\mu(\varsigma) $. The frame operator can be written as  $ S_{F}= T_{F}T^{\ast}_{F} $ where $ 	T^{\ast}_{F}:\mathcal{H}\rightarrow L^{2}(\mathfrak{A}, \mu) $, the adjoint of $ T_{F} $, given by  $ T^{\ast}_{F}(f)(\varsigma)=\{\langle f,F(\varsigma)\rangle\}_{\varsigma\in\mathfrak{A}} $ is called the analysis operator.
The family $ \{S^{-1} _{F}F\}_{\varsigma\in\mathfrak{A}}$ is also a frame for $ \mathcal{H} $, called the canonical dual frame. In general, a continuous frame $ \{G_{\varsigma}\}_{\varsigma \in\mathfrak{A}} \subset \mathcal{H}$ is called an alternate dual for $ \{F_{\varsigma}\}_{\varsigma \in\mathfrak{A}} $ if 
\begin{equation}
	f=\int_{\mathfrak{A}} \langle f,G_{\varsigma}\rangle F_{\varsigma}d\mu(\varsigma) ,f\in\mathcal{H}.
\end{equation}
Every dual frame is of the form $ \{S^{-1}_{F}F_{\varsigma}+v_{\varsigma}\}_{\varsigma \in\mathfrak{A}} $, with $ \{v_{\varsigma}\}_{\varsigma\in\mathfrak{A}} $ a Bessel sequence that satisfies 
\begin{equation}\label{eq2}
	\int_{\mathfrak{A}} \langle f,v_{\varsigma}\rangle F_{\varsigma}=0.
\end{equation}


 \begin{definition} A family of continuous frames $ \{F_{\varsigma,\nu}\}_{\varsigma \in\mathfrak{A},1\leq \nu \leq N} $ in Hilbert space $ \mathcal{H} $ is said to be a continuous woven if there are universal constants $ A $ and $ B $ so that for every partition $ \{\mathfrak{B_{\nu}} \}_{1\leq \nu \leq N} $ of $ \mathfrak{A} $, the family $ \{F_{\varsigma, \nu}\}_{\varsigma \in\mathfrak{B_{\nu}}, 1\leq\nu\leq N} $ is a continuous frame for $ \mathcal{H} $ with bounds $ A $ and $ B $, respectively. The family $  \{F_{\varsigma, \nu}\}_{\varsigma \in\mathfrak{B_{\nu}}, 1\leq\nu\leq N} $  is called a continuous weaving.
	
	If for every partition $ \{\mathfrak{B_{\nu}} \}_{1\leq \nu \leq N} $ of $ \mathfrak{A} $, the family $ \{F_{\varsigma,\nu}\}_{\varsigma \in\mathfrak{B}_{\nu},1\leq \nu \leq N} $ is a continuous frame for  $ \mathcal{H} $, then the  family  $ \{F_{\varsigma,\nu}\}_{\varsigma \in\mathfrak{A},1\leq \nu \leq N} $ is called weakly continuous woven.
\end{definition}

Casazza- Freeman, and Lynch proved in \cite{P.G} that the weaker form of weaving is equivalent to weaving using the uniform boundedness principle.
 
\begin{theorem} \cite{P.G} Given two continuous frames $ \{F_{\varsigma}\} $ and $ \{G_{\varsigma}\} $ for $ \mathcal{H} $, the following are equivalent:
	\begin{itemize}
		\item [(1)]The two continuous frames are continuous woven.
		\item [(2)]The two  continuous frames are weakly  continuous woven.
	\end{itemize}  
	
\end{theorem}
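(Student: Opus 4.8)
The implication $(1)\Rightarrow(2)$ is immediate from the definitions, since a weaving that happens to be a frame with the universal bounds $A,B$ is in particular a frame; so the entire content lies in the converse $(2)\Rightarrow(1)$, and the plan is to manufacture universal bounds out of the bare frame property of every weaving. I would first dispose of the upper bound, which is automatic: for any partition $\{\mathfrak{B}_1,\mathfrak{B}_2\}$ and any $f\in\mathcal H$, the weaving energy is dominated by the sum of the two full frame energies, whence
\[
\int_{\mathfrak{B}_1}|\langle f,F_\varsigma\rangle|^2 d\mu + \int_{\mathfrak{B}_2}|\langle f,G_\varsigma\rangle|^2 d\mu \le (B_F+B_G)\|f\|^2 ,
\]
so $B:=B_F+B_G$ is a universal Bessel bound. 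Only a universal lower bound remains to be produced.

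For the lower bound I would introduce the auxiliary quantity $m(f)=\int_{\mathfrak A}\min\{|\langle f,F_\varsigma\rangle|^2,|\langle f,G_\varsigma\rangle|^2\}\,d\mu$ and note that the pointwise-optimal selector $\mathfrak{B}_1(f)=\{\varsigma:\ |\langle f,F_\varsigma\rangle|\le|\langle f,G_\varsigma\rangle|\}$ realizes $m(f)$ as its weaving energy at $f$. Swapping the infimum over partitions with the infimum over unit vectors then identifies the best universal lower bound with $\inf_{\|f\|=1}m(f)$, so that $(1)$ is \emph{equivalent} to $\inf_{\|f\|=1}m(f)>0$. On the other hand, applying weak weaving to the single partition $\mathfrak{B}_1(f)$ forces $m(f)>0$ for every $f\neq 0$. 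Thus the theorem reduces to the clean statement that pointwise positivity of $m$ on the unit sphere must be upgraded to a strictly positive infimum.

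To close this gap I would argue by contradiction. If $\inf_{\|f\|=1}m(f)=0$, choose unit vectors $f_n$ with $m(f_n)\to 0$ and, passing to a subsequence, a weak limit $f_n\rightharpoonup f$. Since the analysis operators of $F$ and $G$ are bounded, the integrands converge pointwise in $\varsigma$, and Fatou's lemma makes $m$ weakly sequentially lower semicontinuous; hence $m(f)\le\liminf_n m(f_n)=0$, forcing $f=0$, i.e.\ the minimizing sequence escapes weakly to the origin. The decisive step, and the point where the uniform boundedness principle enters exactly as in the discrete prototype of Casazza--Freeman--Lynch \cite{P.G}, is to convert this one escaping sequence together with its adapted selectors $\mathfrak{B}_1(f_n)$ into a \emph{single} fixed partition whose weaving operator fails to be bounded below, contradicting weak weaving: one localizes each $f_n$ on a measurable set $E_n$ carrying all but $\varepsilon_n$ of its $F$- and $G$-energy, uses $f_n\rightharpoonup 0$ to make the $E_n$ essentially disjoint, and patches the rules $\mathfrak{B}_1(f_n)$ across the $E_n$, Banach--Steinhaus guaranteeing that the resulting family of weaving frame operators cannot be simultaneously bounded below unless the original family already was. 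I expect this measure-theoretic patching in the continuous setting to be the main obstacle, the discrete case being handled by the uniform boundedness argument cited from \cite{P.G}.
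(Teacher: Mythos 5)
The paper itself contains no proof of this statement---it is quoted verbatim from Casazza--Freeman--Lynch \cite{P.G}---so your proposal can only be judged on its own terms, and on those terms it is incomplete. Your reduction is correct and cleanly done: the upper bound $B_F+B_G$ is automatic; for fixed $f$ the measurable selector $\mathfrak{B}_1(f)=\{\varsigma:\ |\langle f,F_\varsigma\rangle|\le|\langle f,G_\varsigma\rangle|\}$ realizes $m(f)$ as the infimum over partitions of the weaving energy at $f$, so interchanging the two infima shows that being woven is equivalent to $\inf_{\|f\|=1}m(f)>0$, while weak weaving applied to the adapted partition gives $m(f)>0$ for every $f\neq 0$; and the Fatou/weak-compactness step correctly shows that a minimizing sequence must escape weakly to $0$. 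But everything up to there is the easy part. The entire content of the theorem is the upgrade from pointwise positivity of $m$ to a uniform lower bound, and at exactly that point your argument stops being a proof and becomes a declaration of intent (``I expect this measure-theoretic patching \dots to be the main obstacle'').

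Moreover, the two tools you invoke at the crux do not do the work you assign them. First, the localization step needs the following: for a \emph{fixed} finite-measure set $E$ and $f_n\rightharpoonup 0$, one has $\int_E|\langle f_n,F_\varsigma\rangle|^2\,d\mu\to 0$. In the discrete case this is trivial (a finite sum of terms, each tending to zero), which is why the prototype argument works there; in the continuous case it is false in general, because the restricted analysis operator $f\mapsto\langle f,F_\varsigma\rangle\chi_E(\varsigma)$ need not be compact, and the dominated-convergence route fails since $\|F_\varsigma\|^2$ need not be integrable on $E$. So the sets $E_n$ cannot be made ``essentially disjoint'' by the mechanism you describe, and the control of $f_m$ on $\bigcup_{n<m}E_n$---which is precisely what the patched-partition contradiction requires---is unproven. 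Second, Banach--Steinhaus is the wrong tool for your final claim: the uniform boundedness principle converts pointwise upper bounds into uniform upper bounds, i.e.\ it reproduces the Bessel-type estimate that was already trivial; it cannot by itself yield a uniform \emph{lower} bound for a family of weaving operators, which is the direction at stake. The gap is therefore genuine, and it sits on the only step that separates the theorem from its easy half.
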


\begin{proposition}\label{p} \cite{BE} If $ \{F_{\varsigma,\nu}\}_{\varsigma \in\mathfrak{A},1\leq \nu \leq N} $ is a family of Bessel sequences for $ \mathcal{H} $ with a Bessel bound $ B_{\nu} $ for all $ 1\leq \nu \leq N $, then every weaving is a Bessel sequence with the Bessel bound $ \sum_{\nu=1}^{N} B_{\nu}$.
\end{proposition}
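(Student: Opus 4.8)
The plan is to estimate the Bessel sum for an arbitrary weaving directly from the definition, reducing it term by term to the individual Bessel inequalities. Fix an arbitrary partition $\{\mathfrak{B}_{\nu}\}_{1\leq\nu\leq N}$ of $\mathfrak{A}$ and an arbitrary $f\in\mathcal{H}$. The quantity controlling the Bessel property of the weaving $\{F_{\varsigma,\nu}\}_{\varsigma\in\mathfrak{B}_{\nu},1\leq\nu\leq N}$ is
\begin{equation*}
\sum_{\nu=1}^{N}\int_{\mathfrak{B}_{\nu}}\vert\langle f,F_{\varsigma,\nu}\rangle\vert^{2}\,d\mu(\varsigma).
\end{equation*}

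The key observation is that each $\mathfrak{B}_{\nu}$ is a measurable subset of $\mathfrak{A}$ and the integrand $\vert\langle f,F_{\varsigma,\nu}\rangle\vert^{2}$ is non-negative; hence by monotonicity of the integral in its domain,
\begin{equation*}
\int_{\mathfrak{B}_{\nu}}\vert\langle f,F_{\varsigma,\nu}\rangle\vert^{2}\,d\mu(\varsigma)\leq\int_{\mathfrak{A}}\vert\langle f,F_{\varsigma,\nu}\rangle\vert^{2}\,d\mu(\varsigma)\leq B_{\nu}\Vert f\Vert^{2},
\end{equation*}
where the last step uses that $\{F_{\varsigma,\nu}\}_{\varsigma\in\mathfrak{A}}$ is Bessel with bound $B_{\nu}$.

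Summing over $\nu$ then gives $\sum_{\nu=1}^{N}\int_{\mathfrak{B}_{\nu}}\vert\langle f,F_{\varsigma,\nu}\rangle\vert^{2}\,d\mu(\varsigma)\leq\big(\sum_{\nu=1}^{N}B_{\nu}\big)\Vert f\Vert^{2}$. Since $f$ and the partition were arbitrary and the resulting bound $\sum_{\nu=1}^{N}B_{\nu}$ does not depend on the choice of partition, every weaving is a Bessel sequence with this uniform bound. The argument is essentially computational and carries no real obstacle; the only point requiring care is the invocation of domain-monotonicity, which is legitimate precisely because the integrand is a non-negative square modulus, so that no hypothesis beyond the individual Bessel bounds $B_{\nu}$ is needed.
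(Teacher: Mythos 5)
Your proof is correct. Note that the paper itself gives no proof of Proposition \ref{p} --- it is quoted from \cite{BE} --- so there is no internal argument to compare against; your reasoning (monotonicity of the integral of the non-negative function $\varsigma \mapsto \vert\langle f,F_{\varsigma,\nu}\rangle\vert^{2}$ over a measurable subset, followed by the individual Bessel bounds and summation over $\nu$) is exactly the standard argument from the discrete setting of \cite{BE}, transplanted to integrals, and it is the natural proof here. The only implicit hypothesis, which you correctly flag, is that the partition sets $\mathfrak{B}_{\nu}$ are measurable, so that the restricted integrals make sense; this is tacitly assumed in the paper's definition of a continuous weaving.
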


 \begin{proposition}\label{p1} \cite{BE} Let $ F=\{F_{\varsigma}\}_{\varsigma\in \mathfrak{A}} $ be a continuous frame and $ T $ an invertible operator satisfying $ \Vert I-T \Vert ^{2}< \dfrac{A}{B}$. Then, $ F $ and $ T $ are continuous woven with the universal lower bound $ (\sqrt{A}-\sqrt{B}\Vert I-T \Vert)^{2} $.
	
\end{proposition}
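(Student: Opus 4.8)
The plan is to read the statement with the symbol $T$ denoting the frame $\{TF_\varsigma\}_{\varsigma\in\mathfrak{A}}$ produced from $F$ by the invertible operator $T$, and to verify the definition of continuous woven directly with a uniform constant. It suffices to produce, for an \emph{arbitrary} measurable set $\sigma\subseteq\mathfrak{A}$ (with complement $\sigma^{c}=\mathfrak{A}\setminus\sigma$, which plays the role of the two-block partition $\mathfrak{B}_{1}=\sigma$, $\mathfrak{B}_{2}=\sigma^{c}$), a lower frame bound for the weaving $\{F_\varsigma\}_{\varsigma\in\sigma}\cup\{TF_\varsigma\}_{\varsigma\in\sigma^{c}}$ that does not depend on $\sigma$. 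Fixing $f\in\mathcal{H}$, I would introduce the weaving sum
\begin{equation*}
	W(f)=\int_{\sigma}\vert\langle f,F_\varsigma\rangle\vert^{2}\,d\mu(\varsigma)+\int_{\sigma^{c}}\vert\langle f,TF_\varsigma\rangle\vert^{2}\,d\mu(\varsigma),
\end{equation*}
and regard $W(f)^{1/2}$ as the $L^{2}(\mathfrak{A},\mu)$-norm of the function $\varphi(\varsigma)=\mathbf{1}_{\sigma}(\varsigma)\langle f,F_\varsigma\rangle+\mathbf{1}_{\sigma^{c}}(\varsigma)\langle f,TF_\varsigma\rangle$, where $\mathbf{1}$ is the indicator function. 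The natural reference point is $\psi(\varsigma)=\langle f,F_\varsigma\rangle$, whose norm satisfies $\Vert\psi\Vert_{L^{2}}^{2}=\int_{\mathfrak{A}}\vert\langle f,F_\varsigma\rangle\vert^{2}\,d\mu(\varsigma)\geq A\Vert f\Vert^{2}$ by the lower frame inequality for $F$.

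The key step is to control the discrepancy $\varphi-\psi$. Since $\varphi$ and $\psi$ agree on $\sigma$, one has $\varphi(\varsigma)-\psi(\varsigma)=\mathbf{1}_{\sigma^{c}}(\varsigma)\langle f,(T-I)F_\varsigma\rangle$, and the crucial move is to transfer the operator onto $f$ by writing $\langle f,(T-I)F_\varsigma\rangle=\langle (T-I)^{\ast}f,F_\varsigma\rangle$. Enlarging the domain of integration from $\sigma^{c}$ to all of $\mathfrak{A}$ and invoking the \emph{upper} frame bound $B$ of $F$ applied to the vector $(T-I)^{\ast}f$ then gives
\begin{equation*}
	\Vert\varphi-\psi\Vert_{L^{2}}^{2}=\int_{\sigma^{c}}\vert\langle (T-I)^{\ast}f,F_\varsigma\rangle\vert^{2}\,d\mu(\varsigma)\leq B\,\Vert (T-I)^{\ast}f\Vert^{2}\leq B\,\Vert I-T\Vert^{2}\,\Vert f\Vert^{2},
\end{equation*}
using $\Vert(T-I)^{\ast}\Vert=\Vert I-T\Vert$. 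This is precisely the point where passing to the adjoint lets me use the frame bound of $F$ itself rather than a set-dependent bound for the perturbed family.

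Finally, the triangle inequality in $L^{2}(\mathfrak{A},\mu)$ yields $W(f)^{1/2}=\Vert\varphi\Vert_{L^{2}}\geq\Vert\psi\Vert_{L^{2}}-\Vert\varphi-\psi\Vert_{L^{2}}\geq(\sqrt{A}-\sqrt{B}\,\Vert I-T\Vert)\Vert f\Vert$, and squaring produces the claimed universal lower bound $(\sqrt{A}-\sqrt{B}\Vert I-T\Vert)^{2}$; the hypothesis $\Vert I-T\Vert^{2}<A/B$ is exactly what makes this constant strictly positive. For the matching universal upper bound, I would observe that $\{F_\varsigma\}$ is Bessel with bound $B$, while the same adjoint computation shows $\{TF_\varsigma\}$ is Bessel with bound $B\Vert T\Vert^{2}$, so Proposition~\ref{p} guarantees every weaving is Bessel with bound $B(1+\Vert T\Vert^{2})$. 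I expect the only delicate point to be the bookkeeping of uniformity over $\sigma$: because every estimate above is obtained only after replacing $\int_{\sigma^{c}}$ by $\int_{\mathfrak{A}}$, the resulting bounds are manifestly independent of the partition, which is what upgrades the pointwise inequality into the continuous weaving property.
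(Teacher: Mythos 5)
Your proof is correct: the transfer of $T-I$ onto $f$ via the adjoint, the enlargement of $\int_{\sigma^{c}}$ to $\int_{\mathfrak{A}}$ to invoke the upper frame bound of $F$, and the triangle inequality in $L^{2}(\mathfrak{A},\mu)$ (with the hypothesis $\Vert I-T\Vert^{2}<A/B$ guaranteeing positivity before squaring) yield exactly the claimed universal lower bound $(\sqrt{A}-\sqrt{B}\Vert I-T\Vert)^{2}$, uniformly in $\sigma$. The paper itself states this proposition as a citation to Bemrose et al.\ and gives no proof, but your argument is precisely the standard one from that reference, adapted correctly to the continuous (measure-theoretic) setting, including the Bessel upper bound $B(1+\Vert T\Vert^{2})$ via Proposition \ref{p}.
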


\begin{definition} \cite{BE} If $ U_{1} $ and $ U_{2} $ are subspaces of $ \mathcal{H} $, let 
	\begin{equation*}
		d_{U_{1}}(U_{2})= \inf \{\Vert f-g \Vert :\; f\in U_{1}, g\in S_{U_{2}}\}
	\end{equation*} 
	and 
	\begin{equation*}
		d_{U_{2}}(U_{1})= \inf \{\Vert f-g \Vert :\; f\in S_{U_{1}}, g\in U_{2}\},
	\end{equation*} 
	where $ S_{U_{i}}= S_{\mathcal{H}}\cup U_{i}  $ and $ S_{\mathcal{H}} $ is the unit sphere in $ \mathcal{H} $. Then, the distance between $ U_{1} $ and $ U_{2} $ is defined as
	\begin{equation*}
		d(U_{1},U_{2})= \min\{d_{U_{1}}(U_{2}), d_{U_{2}}(U_{1})\}.
	\end{equation*}
\end{definition}
\begin{theorem}\label{t} \cite{BE} If $ F=\{F_{\varsigma}\}_{\varsigma\in \mathfrak{A}} $ and $G= \{G_{\varsigma}\}_{\varsigma\in \mathfrak{A}} $ are two continuous Riesz bases for $ \mathcal{H} $, then the following are equivalent
	\begin{itemize}
		\item [(1)] $ F $ and $ G $ are continuous woven.
		\item[(2)] For every $ K\subset \mathfrak{A} $, $ d(\overline{span} \{F_{\varsigma}\}_{\varsigma\in K}, \overline{span}\{G_{\varsigma}\}_{\varsigma\in K^{C}})>0 .$
		\item[(3)] There is a constant $ t>0 $ so that for every $ K\subset \mathfrak{A} $, 
		\begin{equation*}
			d_{F_{K},G_{K^{C}}}:= d(\overline{span} \{F_{\varsigma}\}_{\varsigma\in K}, \overline{span}\{G_{\varsigma}\}_{\varsigma\in K^{C}}) \geq t.
		\end{equation*} 
	\end{itemize}
	We denote $ K^{C} $ the complement of $ K $.
\end{theorem}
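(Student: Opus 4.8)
The plan is to prove the cycle $(1)\Rightarrow(3)\Rightarrow(2)\Rightarrow(1)$, reserving Theorem \cite{P.G} for the one place where a pointwise hypothesis has to be made uniform. Throughout I write $U_{F}^{K}=\overline{span}\{F_{\varsigma}\}_{\varsigma\in K}$ and $U_{G}^{K^{C}}=\overline{span}\{G_{\varsigma}\}_{\varsigma\in K^{C}}$, and I first record the identity
\[
d\big(U_{F}^{K},U_{G}^{K^{C}}\big)=\sqrt{1-\big\|P_{U_{F}^{K}}\,P_{U_{G}^{K^{C}}}\big\|^{2}},
\]
obtained by rewriting $d_{U_{1}}(U_{2})=\inf_{\|g\|=1,\,g\in U_{2}}\|P_{U_{1}^{\perp}}g\|$. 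Thus $(2)$ says exactly that the minimal angle between $U_{F}^{K}$ and $U_{G}^{K^{C}}$ is positive, i.e. that these subspaces form a closed (stable) direct sum, and $(3)$ says that this angle is bounded below uniformly in $K$.

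The implication $(3)\Rightarrow(2)$ is immediate. For $(2)\Rightarrow(1)$ I would prove the single-partition statement: for a fixed $K$, positivity of $d(U_{F}^{K},U_{G}^{K^{C}})$ forces the weaving $\{F_{\varsigma}\}_{\varsigma\in K}\cup\{G_{\varsigma}\}_{\varsigma\in K^{C}}$ to be a continuous frame. The Bessel bound is free from Proposition \ref{p}. For the lower bound I would use that the restriction of a Riesz basis is again a Riesz basis of its closed span, so that for every $f\in\mathcal H$
\[
\int_{K}|\langle f,F_{\varsigma}\rangle|^{2}\,d\mu(\varsigma)+\int_{K^{C}}|\langle f,G_{\varsigma}\rangle|^{2}\,d\mu(\varsigma)\ \geq\ A_{F}\,\|P_{U_{F}^{K}}f\|^{2}+A_{G}\,\|P_{U_{G}^{K^{C}}}f\|^{2},
\]
and then bound the right-hand side below by $c\|f\|^{2}$. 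Granting this for every $K$, the pair $F,G$ is weakly continuous woven, and Theorem \cite{P.G} upgrades weakly woven to woven, which is $(1)$.

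For $(1)\Rightarrow(3)$ I would argue by contraposition. If no uniform $t$ exists, choose partitions $K_{n}$ with $d(U_{F}^{K_{n}},U_{G}^{K_{n}^{C}})\to 0$, i.e. $\|P_{U_{F}^{K_{n}}}P_{U_{G}^{K_{n}^{C}}}\|\to 1$, and use the quantitative form of the single-partition dictionary to produce unit vectors on which the weaving quadratic form tends to $0$; this contradicts the universal lower bound $A$ guaranteed by $(1)$. The point is that for two Riesz bases the weaving is an exact system, so that the collapse of the angle is transmitted directly to the frame constant of the corresponding weaving.

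The step I expect to be the true obstacle is the lower bound inside $(2)\Rightarrow(1)$, and specifically the passage from ``$U_{F}^{K}$ and $U_{G}^{K^{C}}$ form a stable direct sum'' to ``$U_{F}^{K}+U_{G}^{K^{C}}=\mathcal H$''. A positive minimal angle only yields that $U_{F}^{K}\dotplus U_{G}^{K^{C}}$ is closed; a continuous frame additionally needs this sum to exhaust $\mathcal H$, and in infinite dimensions trivial intersection does not by itself give surjectivity. I would close this gap with the complementary structure of the two Riesz bases: there is a bounded invertible $L$ on $\mathcal H$ with $LF_{\varsigma}=G_{\varsigma}$, whence $U_{G}^{K^{C}}=L\,U_{F}^{K^{C}}$, and, writing $P$ for the bounded projection onto $U_{F}^{K^{C}}$ along $U_{F}^{K}$ supplied by the Riesz decomposition $\mathcal H=U_{F}^{K}\oplus U_{F}^{K^{C}}$, spanning is equivalent to surjectivity of $PL$ on $U_{F}^{K^{C}}$ while the angle condition gives its boundedness below. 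Since $(U_{F}^{K}+U_{G}^{K^{C}})^{\perp}=U_{\widetilde F}^{K^{C}}\cap U_{\widetilde G}^{K}$ for the dual Riesz bases $\widetilde F,\widetilde G$, the cokernel is controlled by the same kind of angle for the duals, and reconciling these two (self-dual) conditions—uniformly in $K$—is the technical heart of the argument, which is precisely what the symmetric distance $d$ is designed to encode.
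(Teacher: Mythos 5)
The paper states Theorem \ref{t} only as a citation from \cite{BE} and gives no proof of it, so your proposal can only be judged on its own terms; on those terms it has a fatal gap.

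The gap is the ``single-partition statement'' on which your implication $(2)\Rightarrow(1)$ is built: that for a \emph{fixed} $K$, positivity of $d(\overline{span}\{F_{\varsigma}\}_{\varsigma\in K},\overline{span}\{G_{\varsigma}\}_{\varsigma\in K^{C}})$ already forces the weaving $\{F_{\varsigma}\}_{\varsigma\in K}\cup\{G_{\varsigma}\}_{\varsigma\in K^{C}}$ to be a continuous frame. This is false. Take $\mathcal{H}=\ell^{2}(\mathbb{Z})$ with counting measure, $F_{n}=e_{n}$ and $G_{n}=e_{n-1}$ (two orthonormal, hence Riesz, bases), and $K=\{n\geq 1\}$. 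Then $\overline{span}\{F_{n}\}_{n\in K}=\overline{span}\{e_{n}\}_{n\geq 1}$ and $\overline{span}\{G_{n}\}_{n\in K^{C}}=\overline{span}\{e_{n}\}_{n\leq -1}$ are orthogonal, so the distance equals $1$, as large as it can possibly be; yet the weaving $\{e_{n}\}_{n\geq 1}\cup\{e_{n}\}_{n\leq -1}$ is orthogonal to $e_{0}$ and is not a frame, and indeed your lower bound $A_{F}\Vert P_{U_{F}^{K}}f\Vert^{2}+A_{G}\Vert P_{U_{G}^{K^{C}}}f\Vert^{2}$ vanishes at $f=e_{0}$. You do flag the spanning problem, but your proposed repair (the operator $L$ with $LF_{\varsigma}=G_{\varsigma}$, surjectivity of $PL$, the identity $(U_{F}^{K}+U_{G}^{K^{C}})^{\perp}=U_{\widetilde F}^{K^{C}}\cap U_{\widetilde G}^{K}$ for the dual bases) is still confined to the single set $K$, and the example shows that no per-$K$ argument can exist: the hypothesis at $K$ alone is consistent with failure of the conclusion at $K$. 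In the example, hypothesis $(2)$ does fail, but only at the \emph{other} set $K'=\{n\leq 0\}$, where both spans contain $e_{0}$. So a correct proof must convert non-spanning at one index set into zero distance at a different index set --- exactly the ``reconciliation'' you defer --- and that global bookkeeping is not a technical footnote; it is the whole content of $(2)\Rightarrow(1)$. Consequently the reduction to weak weaving followed by the weakly-woven-implies-woven theorem never gets off the ground.

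A second, smaller gap: your $(1)\Rightarrow(3)$ rests on the assertion that a weaving of two Riesz bases is ``an exact system,'' i.e.\ that every weaving of woven Riesz bases is itself a Riesz basis. That is a genuine lemma, not a freebie --- it is precisely how one obtains the uniform bound $\Vert f-g\Vert^{2}\geq A/B_{G}$ by expanding a unit vector $g\in U_{G}^{K^{C}}$ in the weaving --- and your own difficulties with $(2)\Rightarrow(1)$ show that ``frame built from pieces of Riesz bases'' does not self-evidently yield exactness. What is sound in your note: the identity $d=\sqrt{1-\Vert P_{U_{F}^{K}}P_{U_{G}^{K^{C}}}\Vert^{2}}$ (granting that $S_{U_{i}}$ in the paper's definition means $S_{\mathcal{H}}\cap U_{i}$, evidently a typo), the triviality of $(3)\Rightarrow(2)$, the Bessel bound via Proposition \ref{p}, and the per-$K$ inequality obtained by restricting a Riesz basis to a frame for its closed span.
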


\section{Main result}

 Now, we try to examining some conditions under which a continuous frame with its approximate duals is woven. This leads us to construct some concrete pairs of woven frames.

\begin{theorem} \label{t1} Suppose that $ F= \{F_{\varsigma}\} _{\varsigma\in \mathfrak{A}}$ is a continuous redundant frame so that 
	\begin{equation} \label{eq2}
		\Vert I-S_{F}^{-1} \Vert ^{2} < \dfrac{A_{F}}{B_{F}}.
		\end{equation}
	Then, $ F $ has infinitely many dual frames $ \{G_{\varsigma}\}_{\varsigma} $ for which and $ \{F_{\varsigma}\}_{\varsigma} $ are continuous woven.
\end{theorem}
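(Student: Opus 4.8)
The plan is to realize the required duals as small perturbations of the \emph{canonical} dual, and to use the hypothesis $\|I-S_F^{-1}\|^2<A_F/B_F$ precisely to guarantee that the canonical dual itself is woven with $F$. First I would invoke Proposition \ref{p1} with the invertible operator $T=S_F^{-1}$: since $\|I-S_F^{-1}\|^2<A_F/B_F$ by assumption, the proposition yields that $F$ and the canonical dual $\{S_F^{-1}F_\varsigma\}_\varsigma$ are continuous woven, with universal lower bound $C:=(\sqrt{A_F}-\sqrt{B_F}\,\|I-S_F^{-1}\|)^2>0$. This already produces one dual frame woven with $F$; the remaining task is to manufacture infinitely many more, and this is where redundancy enters.

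Because $F$ is a \emph{redundant} continuous frame, its synthesis operator is not injective, so there exists a nonzero Bessel sequence $\{w_\varsigma\}_\varsigma$, with Bessel bound $B_w$, satisfying the compatibility relation $\int_{\mathfrak{A}}\langle f,w_\varsigma\rangle F_\varsigma\,d\mu(\varsigma)=0$ for all $f\in\mathcal{H}$. By the characterization of dual frames recalled in Section~2, for every scalar $t$ the family
\[
G^{(t)}_\varsigma:=S_F^{-1}F_\varsigma+t\,w_\varsigma
\]
is an alternate dual frame of $F$, and distinct values of $t$ give distinct duals since $w\not\equiv 0$. It therefore suffices to show that $F$ and $G^{(t)}$ remain woven for all sufficiently small $|t|$.

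The core of the argument is a uniform perturbation estimate. Fix an arbitrary measurable $K\subseteq\mathfrak{A}$ and write $\Phi_\varsigma=S_F^{-1}F_\varsigma$. Applying the elementary inequality $\|x+y\|^2\geq(1-\epsilon)\|x\|^2-(\tfrac{1}{\epsilon}-1)\|y\|^2$ (valid for $0<\epsilon<1$) in $L^2(K^C,\mu)$ to split off the perturbation $t\,w$, then bounding the $K$-term below by its $(1-\epsilon)$-multiple and using the canonical-dual lower bound $C$ from the first step, one obtains
\[
\int_K |\langle f, F_\varsigma\rangle|^2\, d\mu(\varsigma)+\int_{K^C}|\langle f, G^{(t)}_\varsigma\rangle|^2\, d\mu(\varsigma)
\ \geq\ \Big[(1-\epsilon)C-\big(\tfrac{1}{\epsilon}-1\big)\,t^2 B_w\Big]\,\|f\|^2 .
\]
The right-hand bracket is strictly positive as soon as $t^2 B_w<\epsilon C$ for some $\epsilon\in(0,1)$, i.e. whenever $t^2 B_w<C$; the matching upper (Bessel) bound is automatic from Proposition \ref{p}. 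Since the estimate holds with the same constants for \emph{every} $K$, the pair $F,\,G^{(t)}$ is continuous woven for all $t$ with $|t|<\sqrt{C/B_w}$, producing the claimed infinite family of duals.

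The step I expect to be the main obstacle is keeping the perturbation estimate genuinely uniform over all partitions $K$ at once: the inequality above must use a single $\epsilon$ and a single canonical-dual lower bound $C$ independent of $K$, which is exactly what Proposition \ref{p1} delivers. A secondary point that needs care in the continuous setting is verifying that redundancy truly yields a nonzero compatible family $\{w_\varsigma\}$ --- that is, that non-injectivity of the synthesis operator on $L^2(\mathfrak{A},\mu)$ produces a sequence which is nonzero on a set of positive measure --- and that each $G^{(t)}$ is genuinely a frame, which is automatic since every alternate dual of a frame is itself a frame.
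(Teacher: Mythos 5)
Your proposal is correct and follows essentially the same route as the paper: invoke Proposition \ref{p1} with $T=S_F^{-1}$ to weave $F$ with its canonical dual, then perturb the canonical dual by small scalar multiples of a nonzero Bessel family $\{v_\varsigma\}$ satisfying $\int_{\mathfrak{A}}\langle f,v_\varsigma\rangle F_\varsigma\,d\mu(\varsigma)=0$ (supplied by redundancy), and verify a uniform lower weaving bound, with the upper bound coming from Proposition \ref{p}. The only difference is cosmetic --- the paper controls the cross term via Cauchy--Schwarz and the canonical dual's Bessel bound $1/A_F$, yielding the condition $\beta^2B_V+2\beta\sqrt{B_V/A_F}<\mathcal{R}$, while you use the weighted inequality $\|x+y\|^2\geq(1-\epsilon)\|x\|^2-(\tfrac{1}{\epsilon}-1)\|y\|^2$, yielding $t^2B_w<C$; both give an explicit interval of admissible perturbation parameters and hence infinitely many woven duals.
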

\begin{proof} By Proposition \ref{p1} we have $ F $ and $ \{S^{-1}_{F} F_{\varsigma} \} $ are woven frames for $ \mathcal{H} $ with the lower bound $\mathcal{R}:=  (\sqrt{A}-\sqrt{B}\Vert I-S^{-1}_{F}\Vert)^{2} $. Now, let $ V= \{v_{\varsigma}\}_{\varsigma} $ be a bessel sequence, which satisfies 	$ f=\int_{\mathfrak{A}} \langle f,v_{\varsigma}\rangle F_{\varsigma} d\mu(\varsigma) $, $ (f\in \mathcal{H} )$ and let $ \varepsilon >0 $ so that
\begin{equation}
	\varepsilon^{2} B_{V} +2\varepsilon \sqrt{B_{V}/A_{F}}< \mathcal{R}.
\end{equation}
	Then, $ G_{\beta}:= \{S^{-1}_{F}F_{\varsigma}+\beta v_{\varsigma}\}_{\varsigma} $ is a dual frame of $ F $, for all $ 0<\beta< \varepsilon $. To show $ F $ and $ G_{\beta} $ constitute  woven continuous frames for $ \mathcal{H} $, by Proposition \ref{p}, we need to prove the existence of a lower bound.
	
	Suppose $ \mathfrak{B}\subset \mathfrak{A} $. Then
	\begin{align*}
		&\int_{\mathfrak{B}}\vert \langle f,F_{\varsigma} \rangle \vert^{2} d\mu(\varsigma)+\int_{\mathfrak{B}^{C}}\vert \langle f,S^{-1}_{F}F_{\varsigma}+\beta v_{\varsigma} \rangle \vert^{2} d\mu(\varsigma) \\
		=& 	\int_{\mathfrak{B}}\vert \langle f,F_{\varsigma} \rangle \vert^{2} d\mu(\varsigma)+\int_{\mathfrak{B}^{C}}\vert \langle f,S^{-1}_{F}F_{\varsigma}\rangle +  \langle f,\beta v_{\varsigma} \rangle \vert^{2} d\mu(\varsigma)\\
		&\geq \int_{\mathfrak{B}}\vert \langle f,F_{\varsigma} \rangle \vert^{2} d\mu(\varsigma)+ \int_{\mathfrak{B}^{C}} \vert \;\vert \langle f,S^{-1}_{F}F_{\varsigma}\rangle \vert - \vert \langle f,\beta v_{\varsigma} \rangle\vert\; \vert^{2} d\mu(\varsigma)\\
		& \geq \int_{\mathfrak{B}}\vert \langle f,F_{\varsigma} \rangle \vert^{2} d\mu(\varsigma)+\int_{\mathfrak{B}^{C}} \vert  \langle f,S^{-1}_{F}F_{\varsigma}\rangle d\mu(\varsigma) \vert^{2}\\
		& -\int_{\mathfrak{B}^{C}} \vert \langle f,\beta v_{\varsigma} \rangle\vert^{2}d\mu(\varsigma) -2\int_{\mathfrak{B}^{C}}\vert \langle f,S^{-1}_{F}F_{\varsigma}\rangle \vert\;\vert \langle f,\beta v_{\varsigma} \rangle\vert d\mu(\varsigma)\\
		& \geq (-\beta^{2} B_{V} -2\beta  \sqrt{B_{V}/A_{F}}+ \mathcal{R}) \Vert f\Vert ^{2}.
			\end{align*}
\end{proof}


\begin{proposition}Let $ F=\{F_{\varsigma}\}_{\varsigma\in \mathfrak{A}} $ be a redundant continuous frame for $ \mathcal{H} $ and suppose there exists an operator $T\in B( \mathcal{H}) $ so that 
\begin{equation}\label{d2eq2}
	\| I-T\| < 1 \: ,  \| I-T^{\ast}S^{-1}_{F}\|^{2}< \dfrac{A_{F}}{B_{F}}.
\end{equation}	
	Then, $ F $ has infinitely many approximate dual frames such as   $\{ G_{\varsigma} \}_{\varsigma \in\mathfrak{A}} $, for wich $ \{ F_{\varsigma}\}_{\varsigma\in \mathfrak{A}} $ and $ \{G_{\varsigma}\}_{\varsigma \in\mathfrak{A}} $ are continuous woven.

\end{proposition}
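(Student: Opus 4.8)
The plan is to rerun the scheme of Theorem \ref{t1} with the canonical dual $\{S_F^{-1}F_\varsigma\}$ replaced by the operator-generated family $\{T^{\ast}S_F^{-1}F_\varsigma\}_{\varsigma\in\mathfrak{A}}$, since the two hypotheses on $T$ are exactly what each step of that scheme requires. First I would check that this family is an approximate dual of $F$. As $\|I-T\|<1$ the operator $T$ is invertible, and since $S_F^{-1}$ is invertible and self-adjoint, $T^{\ast}S_F^{-1}$ is a bounded invertible operator, so $\{T^{\ast}S_F^{-1}F_\varsigma\}$ is again a continuous frame. Computing the mixed frame operator, for $f\in\mathcal{H}$,
\begin{equation*}
	\int_{\mathfrak{A}}\langle f,T^{\ast}S_F^{-1}F_\varsigma\rangle F_\varsigma\,d\mu(\varsigma)=\int_{\mathfrak{A}}\langle S_F^{-1}Tf,F_\varsigma\rangle F_\varsigma\,d\mu(\varsigma)=S_F S_F^{-1}Tf=Tf,
\end{equation*}
so the mixed operator equals $T$, and $\|I-T\|<1$ certifies that $\{T^{\ast}S_F^{-1}F_\varsigma\}$ is an approximate dual of $F$.

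Next I would obtain the basic weaving by applying Proposition \ref{p1} to the invertible operator $T^{\ast}S_F^{-1}$: the hypothesis $\|I-T^{\ast}S_F^{-1}\|^{2}<\frac{A_F}{B_F}$ is precisely its requirement, and it produces the universal lower bound
\begin{equation*}
	\mathcal{R}:=\left(\sqrt{A_F}-\sqrt{B_F}\,\|I-T^{\ast}S_F^{-1}\|\right)^{2}>0
\end{equation*}
for the pair $F$, $\{T^{\ast}S_F^{-1}F_\varsigma\}$. Finally I would perturb: fix a Bessel sequence $V=\{v_\varsigma\}$ with bound $B_V$, let $B'$ be a Bessel bound of $\{T^{\ast}S_F^{-1}F_\varsigma\}$, and choose $\varepsilon>0$ so small that both $\varepsilon^{2}B_V+2\varepsilon\sqrt{B'B_V}<\mathcal{R}$ and $\|I-T\|+\varepsilon\,\|T_F T_V^{\ast}\|<1$ hold. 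For each $0<\beta<\varepsilon$ set $G_\varsigma:=T^{\ast}S_F^{-1}F_\varsigma+\beta v_\varsigma$; the mixed frame operator of $G$ and $F$ is $T+\beta\,T_F T_V^{\ast}$, which stays within distance $<1$ of $I$, so every $G_\varsigma$ is an approximate dual, and the distinct values of $\beta$ give infinitely many of them.

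It then remains to verify the weaving of $F$ and $G_\varsigma$. Given $\mathfrak{B}\subset\mathfrak{A}$, I would bound
\begin{equation*}
	\int_{\mathfrak{B}}|\langle f,F_\varsigma\rangle|^{2}d\mu(\varsigma)+\int_{\mathfrak{B}^{C}}|\langle f,T^{\ast}S_F^{-1}F_\varsigma+\beta v_\varsigma\rangle|^{2}d\mu(\varsigma)
\end{equation*}
from below using $|a+b|^{2}\ge |a|^{2}-|b|^{2}-2|a||b|$ on $\mathfrak{B}^{C}$, estimating $\int_{\mathfrak{B}^{C}}|\langle f,\beta v_\varsigma\rangle|^{2}d\mu(\varsigma)\le\beta^{2}B_V\|f\|^{2}$ by the Bessel bound, the cross term by $\le 2\beta\sqrt{B'B_V}\|f\|^{2}$ via Cauchy--Schwarz together with the bounds $B'$ and $B_V$, and combining the two leading terms through $\mathcal{R}$. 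This yields the lower bound $(\mathcal{R}-\beta^{2}B_V-2\beta\sqrt{B'B_V})\|f\|^{2}>0$, and Proposition \ref{p} supplies the uniform upper bound, so $F$ and $G_\varsigma$ are continuous woven. The crux of the argument, and the step I expect to be the main obstacle, is the first one: recognizing that the base family must be $\{T^{\ast}S_F^{-1}F_\varsigma\}$ so that its mixed operator with $F$ is exactly $T$. This single identification is what ties the two hypotheses together, with $\|I-T\|<1$ guaranteeing approximate duality and $\|I-T^{\ast}S_F^{-1}\|^{2}<A_F/B_F$ feeding Proposition \ref{p1}; the remaining perturbation estimate is routine and parallels the proof of Theorem \ref{t1}.
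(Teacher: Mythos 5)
Your proposal is correct and follows essentially the same route as the paper's proof: take $\{T^{\ast}S_F^{-1}F_{\varsigma}\}_{\varsigma}$ as the base family (whose mixed operator with $F$ is $T$, so $\Vert I-T\Vert<1$ gives approximate duality), apply Proposition \ref{p1} to the invertible operator $T^{\ast}S_F^{-1}$ to get the universal lower bound, and then perturb by $\beta v_{\varsigma}$ with small $\beta$, rerunning the estimate from Theorem \ref{t1}. The only minor difference is that the paper chooses $V$ annihilated by the synthesis operator of $F$ (so the mixed operator stays exactly $T$), while you allow a general Bessel sequence $V$ and instead control $\Vert I-T-\beta T_FT_V^{\ast}\Vert<1$ directly, which is an equally valid and slightly more flexible variant.
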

\begin{proof} The sequence $ \{T^{\ast}S^{-1}_{F}F_{\varsigma}+ v_{\varsigma}\}_{\varsigma \in \mathfrak{A}} $ is an approximate dual of $ F $, with $ V=\{v_{\varsigma}\}_{\varsigma\in\mathfrak{A}} $ satisfies \ref{eq2}. Also	by Proposition \ref{p1}, $ F $ and $ \{T^{\ast}S^{-1}F_{\varsigma}\}_{\varsigma\in\mathfrak{A}} $ 
	are continuous woven lower with the universal bound $ (\sqrt{A_{F}}-\sqrt{B_{F}}\Vert I-T^{\ast}S^{-1}_{F}\Vert)^{2} $.
Let $\varepsilon >0 $, such that

\begin{equation*}
	\varepsilon^{2}\Vert T_{V}\Vert^{2}+2\varepsilon\Vert T_{V}\Vert  \;\Vert S^{-1}_{F}T\Vert \sqrt{B_{F}}< ( \sqrt{A_{G}}-\sqrt{B_{F}} \Vert I-T^{\ast}S^{-1}_{F}\Vert )^{2}.
\end{equation*}	
Then $ \Phi_{\beta} =  \{T^{\ast}S^{-1}_{F} F_{\varsigma}+ \beta v_{\varsigma}\}_{\varsigma \in\mathfrak{A}}$ is an approximate dual frame of $ F $ for all $ 0<\beta<\varepsilon $.
Using Theorem \ref{t1}, we obtain that $ F $ and $ \Phi_{\beta} $ are  woven continuous frames.
\end{proof}

\begin{theorem}\label{t8} Let $ F=\{F_{\varsigma}\} _{\varsigma\in\mathfrak{A}}$ be a continuous frame for $  \mathcal{H} $. Then, the following assertions hold: 
	\begin{itemize}
		\item [(1)] If $ G=\{G_{\varsigma}\}_{\varsigma\in\mathfrak{A}} $ is a dual frame of $ F $ and $ \{F_{\varsigma}\}_{\varsigma \in K} \cup \{G_{\varsigma}\}_{\varsigma\in K^{C}} $ for $ K\subset \mathfrak{A} $. Then, $ F $ and $ G $ are continuous woven.
		\item[(2)] If $ F=\{F_{\varsigma}\}_{\varsigma\in\mathfrak{A}} $ is continuous Riesz basis for $ \mathcal{H} $. Then, $ F $ and its canonical dual are continuous woven. 
	\end{itemize}

\end{theorem}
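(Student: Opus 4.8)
The plan is to treat the two assertions independently. For part (1), I read the stated hypothesis as the assertion that for every $K\subset\mathfrak{A}$ the family $\{F_{\varsigma}\}_{\varsigma\in K}\cup\{G_{\varsigma}\}_{\varsigma\in K^{C}}$ is a continuous frame for $\mathcal{H}$. Since the pairs $\{K,K^{C}\}$ exhaust all two-set partitions of $\mathfrak{A}$, this says precisely that $F$ and $G$ are weakly continuous woven. I would then invoke the equivalence recalled above between weak continuous weaving and continuous weaving (the theorem of Casazza, Freeman and Lynch, \cite{P.G}): it promotes the hypothesis to the existence of universal bounds $A$ and $B$, and hence shows that $F$ and $G$ are continuous woven. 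No computation is needed beyond this citation, so the whole of part (1) reduces to applying that equivalence.

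For part (2), I would use the distance characterization of Theorem \ref{t}, since $F$ is assumed to be a continuous Riesz basis. First recall that the canonical dual $\widetilde{F}=\{S_{F}^{-1}F_{\varsigma}\}_{\varsigma\in\mathfrak{A}}$ of a Riesz basis is again a Riesz basis and is biorthogonal to $F$, that is $\langle F_{\varsigma},S_{F}^{-1}F_{\eta}\rangle=\delta_{\varsigma\eta}$. Fix an arbitrary $K\subset\mathfrak{A}$. Because $K$ and $K^{C}$ are disjoint, biorthogonality gives $\langle F_{\varsigma},\widetilde{F}_{\eta}\rangle=0$ for all $\varsigma\in K$ and $\eta\in K^{C}$, so every generator of $\overline{span}\{F_{\varsigma}\}_{\varsigma\in K}$ is orthogonal to every generator of $\overline{span}\{\widetilde{F}_{\eta}\}_{\eta\in K^{C}}$. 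Passing to closures, the two subspaces $U_{1}=\overline{span}\{F_{\varsigma}\}_{\varsigma\in K}$ and $U_{2}=\overline{span}\{\widetilde{F}_{\eta}\}_{\eta\in K^{C}}$ are orthogonal.

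Next I would estimate the distance of the Definition. For any $f\in U_{1}$ and any unit vector $g\in S_{U_{2}}$, orthogonality yields $\Vert f-g\Vert^{2}=\Vert f\Vert^{2}+\Vert g\Vert^{2}\geq 1$, so $d_{U_{1}}(U_{2})\geq 1$, and symmetrically $d_{U_{2}}(U_{1})\geq 1$; hence $d(U_{1},U_{2})\geq 1$. Since $K$ was arbitrary, the uniform estimate $d_{F_{K},\widetilde{F}_{K^{C}}}\geq 1=:t>0$ holds for every $K\subset\mathfrak{A}$, which is exactly condition (3) of Theorem \ref{t}. Applying Theorem \ref{t} with $G=\widetilde{F}$ then gives that $F$ and its canonical dual are continuous woven, finishing part (2). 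The main obstacle I anticipate is the continuous-setting justification of biorthogonality: in the measure-space formulation the identity $\langle F_{\varsigma},S_{F}^{-1}F_{\eta}\rangle=\delta_{\varsigma\eta}$ and the associated orthogonality of the two closed spans must be argued through the Riesz-basis structure of $F$ and $\widetilde{F}$ and the synthesis/analysis operators $T_{F},T_{F}^{\ast}$, rather than through a naive pointwise Kronecker delta; once the orthogonality of $U_{1}$ and $U_{2}$ is secured, the distance estimate and the appeal to Theorem \ref{t} are routine.
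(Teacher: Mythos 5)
Your part (2) follows the paper's own route: the paper picks unit vectors $X=\int_{K}\alpha_{\varsigma}F_{\varsigma}\,d\mu(\varsigma)$ and $Y=\int_{K^{C}}\beta_{\varsigma}S_{F}^{-1}F_{\varsigma}\,d\mu(\varsigma)$, expands $\Vert X-Y\Vert^{2}$, discards the cross term $2\mathrm{Re}\langle X,Y\rangle$ (which vanishes for exactly the biorthogonality reason you spell out), gets $\Vert X-Y\Vert^{2}\geq 1$, and invokes Theorem \ref{t}. Your write-up is the same argument, and in fact more careful, since the paper never states the biorthogonality $\langle F_{\varsigma},S_{F}^{-1}F_{\eta}\rangle=\delta_{\varsigma\eta}$ that makes the cross term zero; flagging that as the point needing justification in the continuous setting is a genuine improvement.

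Part (1) is where your proposal has a real gap. The hypothesis as printed is grammatically incomplete, and you resolved the ambiguity by reading it as ``$\{F_{\varsigma}\}_{\varsigma\in K}\cup\{G_{\varsigma}\}_{\varsigma\in K^{C}}$ is a continuous frame \emph{for} $\mathcal{H}$ for every $K$.'' Under that reading, part (1) is word-for-word the definition of weakly continuous woven plus the quoted equivalence theorem of Casazza--Freeman--Lynch, i.e., the statement has no content at all. That should have been a warning sign. The intended hypothesis, as one sees both from the paper's proof and from the corollary that applies this theorem (in finite dimensions each weaving is automatically a continuous \emph{frame sequence}, and the corollary deduces wovenness from Theorem \ref{t8}(1)), is the weaker one: each weaving is a frame for its own \emph{closed span}, not for $\mathcal{H}$. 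Under that reading your argument does not go through, because a frame sequence need not be a frame for $\mathcal{H}$; the missing step is completeness of each weaving, and supplying it is the entire content of the paper's proof. Namely, if $f\perp\{F_{\varsigma}\}_{\varsigma\in K}\cup\{G_{\varsigma}\}_{\varsigma\in K^{C}}$, then the duality relation $f=\int_{\mathfrak{A}}\langle f,G_{\varsigma}\rangle F_{\varsigma}\,d\mu(\varsigma)$ gives
\begin{equation*}
\Vert f\Vert^{2}=\int_{K}\langle f,G_{\varsigma}\rangle\,\overline{\langle f,F_{\varsigma}\rangle}\,d\mu(\varsigma)+\int_{K^{C}}\langle f,G_{\varsigma}\rangle\,\overline{\langle f,F_{\varsigma}\rangle}\,d\mu(\varsigma)=0,
\end{equation*}
since $\langle f,F_{\varsigma}\rangle=0$ on $K$ and $\langle f,G_{\varsigma}\rangle=0$ on $K^{C}$. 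Hence $f=0$, each weaving is complete, a complete frame sequence is a frame for all of $\mathcal{H}$, and then weak weaving (and weaving, via the equivalence you cite) follows. This duality computation is what your proposal omits.
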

\begin{proof}Let $ f\in\mathcal{H} $ such that $ f\bot \{F_{\varsigma}\}_{\varsigma \in K} \cup \{G_{\varsigma}\}_{\varsigma \in K^{C}}$. Then, we obtain
	
	\begin{align*}
		\Vert f\Vert^{2}=\langle f,f\rangle &= \langle f, \int_{\mathfrak{A}} \langle f, G_{\varsigma} \rangle F_{\varsigma} d\mu(\varsigma) \rangle \\
		&= \langle f, \int_{K} \langle f, G_{\varsigma} \rangle F_{\varsigma} d\mu(\varsigma) \rangle+  \langle f, \int_{K^{C}} \langle f, G_{\varsigma} \rangle F_{\varsigma} d\mu(\varsigma) \rangle\\
		&= \int_{K} \langle f, G_{\varsigma} \rangle \langle f,F_{\varsigma}\rangle d\mu(\varsigma)+\int_{K^{C}} \langle f, G_{\varsigma} \rangle \langle f,F_{\varsigma}\rangle d\mu(\varsigma)\\
		&=0.
	\end{align*}

Hence $ f=0 $ and consequently $ \overline{span} \{\{F_{\varsigma}\}_{\varsigma \in K} \cup \{G_{\varsigma}\}_{\varsigma \in K^{C}}\}=\mathcal{H} $ for $ K\subset \mathfrak{A} $. Then $ F $ and $ G $ are weakly continuous woven.

For $ (2) $, let $ K \subset \mathfrak{A} $, $ X= \int_{K}\alpha_{\varsigma} F_{\varsigma}d\mu(\varsigma) \in\overline{span} \{\{F_{\varsigma}\}_{\varsigma \in K} $ and $ Y= \int_{K^{C}}\beta_{\varsigma}S^{-1}_{F}F_{\varsigma} d\mu(\varsigma)\in \overline{span} \{\{ S^{-1}_{F}F_{\varsigma}\}_{\varsigma \in K^{C}} $ with $ \Vert X\Vert =1 $ and $ \Vert Y \Vert =1$.
 
 Then, we have
 
 \begin{align*}
 \Vert X-Y\Vert ^{2}&= \Vert \int_{K}\alpha_{\varsigma} F_{\varsigma}d\mu(\varsigma)-\int_{K^{C}}\beta_{\varsigma}S^{-1}_{F}F_{\varsigma} d\mu(\varsigma)\Vert^{2}\\
 &= \Vert \int_{K}\alpha_{\varsigma} F_{\varsigma}d\mu(\varsigma)\Vert^{2}+\Vert  \int_{K^{C}}\beta_{\varsigma}S^{-1}_{F}F_{\varsigma} d\mu(\varsigma)  \Vert^{2}-2Re \langle  \int_{K}\alpha_{\varsigma} F_{\varsigma}d\mu(\varsigma) ,\int_{K^{C}}\beta_{\varsigma}S^{-1}_{F}F_{\varsigma} d\mu(\varsigma) \rangle \\
 &=\Vert \int_{K}\alpha_{\varsigma} F_{\varsigma}d\mu(\varsigma)\Vert^{2}+\Vert  \int_{K^{C}}\beta_{\varsigma}S^{-1}_{F}F_{\varsigma} d\mu(\varsigma)  \Vert^{2} \geq 1.
 \end{align*}
Thus, $ F $ and $ G $ are continuous woven by Theorem \ref{t}.
\end{proof}

 \begin{example}
 	Let  $ \mathcal{H}=L^{2}(\mathbb{N}) $ and $ \mathfrak{A}= (\mathbb{R}^{2},\mu)$ where $ \mu $ is the Lebesgue measure.
 	Let $ \chi_{I} $ denote the characteristic function of a set $ I $. 
 	Let $ \{\phi_{i}\}_{(1,2)} $ be any non-zero element in $ \mathcal{H} $ such $ \phi_{2}=2\phi_{1} $. Then, the family  $ \{ I_{x}T_{y}\phi_{1}\}_{(x,y)\in \mathfrak{A}} $ and $\{I_{x}T_{y}\phi_{2}\}_{(x,y)\in \mathfrak{A}} $ are continuous frames for $ \mathcal{H} $ with respect to $ \mu $ with frame bounds $ A_{i} $ and $ B_{i} $ for $ i\in \{1,2\} $.
 	
 	For any subset $ K $ of $ \mathfrak{A} $ and for $ f\in\mathcal{H} $, we define the function $\psi : \mathfrak{A} \rightarrow \mathbb{C}  $ by
 	$$ \psi(x,y)= \psi_{1}(x,y).\chi_{K}(x,y)+ \psi_{2}(x,y) .\chi_{K^{C}}(x,y)$$  
 	
 	where $ \psi_{i} (x,y)= \langle f,I_{x}T_{y}2\phi_{i}\rangle$ , $ i\in\{1,2\} $ 
 	
 	We have $  $ $ \{ I_{x}T_{y}\phi\}_{(x,y)\in K} \cup   \{ I_{x}T_{y}2\phi\}_{(x,y)\in K^{C}} $ is a continuous Bessel sequence with Bessel bound $ \sum_{i\in\{1,2\}} B_{i}$.
 	
 	We obtain
 	\begin{align*}
 		\Vert \psi \Vert^{2}_{L^{2}(\mu)}&=\int\int_{K} \vert \langle f,I_{x}T_{y}\phi\rangle \vert^{2} dxdy+\int\int_{K^{C}} \vert \langle f,I_{x}T_{y}2\phi\rangle \vert^{2} dxdy\\
 		&\geq \int\int_{K} \vert \langle f,I_{x}T_{y}\phi\rangle \vert^{2} dxdy+\int\int_{K^{C}} \vert \langle f,I_{x}T_{y}\phi\rangle \vert^{2} dxdy\\
 		&=\int\int_{\mathfrak{A}}  \vert \langle f,I_{x}T_{y}\phi\rangle \vert^{2} dxdy\\
 		&\geq A_{1}\Vert f\Vert^{2}.
 	\end{align*}
 	Hence  $ \{ I_{x}T_{y}\phi_{1}\}_{(x,y)\in \mathfrak{A}} $ and  $ \{ I_{x}T_{y}\phi_{2}\}_{(x,y)\in \mathfrak{A}} $ are woven with universal bounds $ A_{1} $ and $ \sum_{i\in\{1,2\}}B_{i} $.
 \end{example}
\begin{corollary} Let $ F=\{F_{\varsigma}\}_{\varsigma \in \mathfrak{A}} $ be a continuous frame for finite dimensional Hilbert space $ \mathcal{H} $. Then, $ F $ is  continuous woven with all its duals. 
	
\end{corollary}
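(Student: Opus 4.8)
The plan is to deduce the corollary from Theorem \ref{t8}(1) together with the two features that are special to finite dimensions. Fix an arbitrary dual frame $G=\{G_{\varsigma}\}_{\varsigma\in\mathfrak{A}}$ of $F$; since a dual is itself a continuous frame, both $F$ and $G$ are continuous frames, so it suffices to show they are woven. First I would fix an arbitrary measurable $K\subset\mathfrak{A}$ and reuse verbatim the computation from the proof of Theorem \ref{t8}(1): if $f\in\mathcal{H}$ is orthogonal to $\{F_{\varsigma}\}_{\varsigma\in K}\cup\{G_{\varsigma}\}_{\varsigma\in K^{C}}$, then writing $\|f\|^{2}=\langle f,\int_{\mathfrak{A}}\langle f,G_{\varsigma}\rangle F_{\varsigma}\,d\mu(\varsigma)\rangle$ and splitting the integral over $K$ and $K^{C}$ forces $\|f\|^{2}=0$. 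Hence the weaving $\{F_{\varsigma}\}_{\varsigma\in K}\cup\{G_{\varsigma}\}_{\varsigma\in K^{C}}$ spans $\mathcal{H}$ for \emph{every} $K$. The crucial point is that this spanning property holds for any dual $G$ with no extra hypothesis, which is exactly what permits quantifying over all duals in the finite-dimensional statement.

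The second step is to upgrade ``spanning'' to ``continuous frame''. By Proposition \ref{p} every weaving is Bessel with bound $B_{F}+B_{G}$, so only a positive lower bound is needed. Here $\dim\mathcal{H}<\infty$ does the work: the function $f\mapsto\int_{K}|\langle f,F_{\varsigma}\rangle|^{2}\,d\mu(\varsigma)+\int_{K^{C}}|\langle f,G_{\varsigma}\rangle|^{2}\,d\mu(\varsigma)$ is continuous on the unit sphere, which is compact, and it is strictly positive there since it can vanish only on the orthogonal complement of the span, namely $\{0\}$. Thus it attains a positive minimum, so each weaving is a continuous frame; that is, $F$ and $G$ are weakly continuous woven.

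Finally I would invoke the equivalence of weak weaving and weaving for two continuous frames (the Casazza--Freeman--Lynch result quoted just before Proposition \ref{p}) to conclude that $F$ and $G$ are continuous woven. Since $G$ was an arbitrary dual, $F$ is woven with all of its duals, as claimed.

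The step to watch is the second one, specifically the passage to bounds that are \emph{uniform} over all partitions $K$. For a single fixed $K$ compactness yields a positive constant, but a priori the infimum of these constants over the continuum of subsets $K$ could collapse to $0$. This is precisely the gap closed by the quoted weak-woven $\Leftrightarrow$ woven equivalence, which rests on the uniform boundedness principle. Accordingly, the cleanest route is to establish weak weaving by the spanning argument above and let that equivalence supply the universal constants, rather than attempting to control an explicit lower bound simultaneously across all $K$.
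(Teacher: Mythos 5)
Your proposal is correct and follows essentially the same route as the paper: the paper's two-line proof simply observes that in finite dimensions every weaving $\{F_{\varsigma}\}_{\varsigma\in K}\cup\{G_{\varsigma}\}_{\varsigma\in K^{C}}$ is automatically a continuous frame sequence and then cites Theorem \ref{t8}, whose internal argument is exactly your spanning computation followed by the passage from weak weaving to weaving. Your write-up is in fact more explicit than the paper's, since you spell out the compactness argument that yields a lower frame bound for each fixed $K$ and the appeal to the Casazza--Freeman--Lynch equivalence to obtain universal bounds, both of which the paper leaves implicit.
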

\begin{proof} Suppose that  $ G=\{G_{\varsigma}\} _{\varsigma \in\mathfrak{A}}$ is an arbitrary dual continuous frame of $ F $, then the family $ \{F_{\varsigma}\}_{\varsigma \in K} \cup \{G_{\varsigma}\}_{\varsigma \in K^{C}} $ is a continuous frame sequence, for every $ K \subset \mathfrak{A} $. Using Theorem \ref{t8} we have $ F $  and $ G $ are continuous woven.

\end{proof}
In the next theorem we show that in infinite dimension Hilbert spaces, continuous frames are continuous woven with their canonical duals.

\begin{theorem} Let $ F= \{F_{\varsigma}\}_{\varsigma\in\mathfrak{A}} $ be a continuous frame for $ \mathcal{H} $, so that the norm of its redundant elements be small enough. Then, $ F $ is continuous woven with its canonical dual.
	
\end{theorem}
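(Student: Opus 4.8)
The plan is to reduce the statement to the hypothesis of Proposition \ref{p1} applied with the invertible operator $T=S_F^{-1}$. Indeed, that proposition asserts that $F$ and $\{S_F^{-1}F_\varsigma\}_{\varsigma\in\mathfrak{A}}$, namely $F$ together with its canonical dual, are continuous woven as soon as $\Vert I-S_F^{-1}\Vert^2<A_F/B_F$. Thus the whole task is to show that the smallness of the redundant elements forces this single operator inequality; once it holds, the conclusion is immediate.

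To estimate $\Vert I-S_F^{-1}\Vert$, I would first write $I-S_F^{-1}=S_F^{-1}(S_F-I)$ and use the bound $\Vert S_F^{-1}\Vert\leq A_F^{-1}$, which follows from $A_F I\leq S_F\leq B_F I$, to obtain $\Vert I-S_F^{-1}\Vert\leq A_F^{-1}\Vert S_F-I\Vert$. It then suffices to control $\Vert S_F-I\Vert$. I would make the notion of ``redundant elements'' precise by splitting the index set as $\mathfrak{A}=\Lambda\cup R$, where $\{F_\varsigma\}_{\varsigma\in\Lambda}$ is the essential part, a system whose frame operator equals the identity, and $R$ collects the redundant indices. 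With this splitting $S_F=S_\Lambda+S_R=I+S_R$, where $S_R f=\int_R\langle f,F_\varsigma\rangle F_\varsigma\,d\mu(\varsigma)$ is the positive frame operator of the redundant family, so that $S_F-I=S_R$ exactly. A Cauchy--Schwarz estimate then gives $\Vert S_R\Vert\leq\int_R\Vert F_\varsigma\Vert^2\,d\mu(\varsigma)=:\delta$, and the hypothesis that the redundant elements have small norm is exactly the statement that $\delta$ can be made as small as desired.

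Finally I would combine these bounds. Since $S_F=I+S_R\geq I$ we have $A_F\geq 1$, while $S_F\leq(1+\delta)I$ gives $B_F\leq 1+\delta$; hence $\Vert I-S_F^{-1}\Vert\leq\delta/A_F\leq\delta$ and $A_F/B_F\geq 1/(1+\delta)$. For $\delta$ small enough one has $\delta^2(1+\delta)<1$, i.e. $\Vert I-S_F^{-1}\Vert^2\leq\delta^2<1/(1+\delta)\leq A_F/B_F$, so the hypothesis of Proposition \ref{p1} is met and $F$ is continuous woven with its canonical dual.

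The hard part will be the middle step. In the continuous setting the clean decomposition of $\mathfrak{A}$ into an essential part whose frame operator is exactly the identity and a small redundant part is delicate, since an orthonormal basis is inherently discrete while $\mathfrak{A}$ carries a continuous measure; rigorously isolating such a $\Lambda$ (or, more weakly, a subfamily whose frame operator is only \emph{close} to $I$, which would leave a residual term $S_\Lambda-I$ to absorb) and quantifying the dependence of $A_F$ and $B_F$ on $\delta$ is where the genuine work lies. The rest of the argument is the routine perturbation estimate sketched above.
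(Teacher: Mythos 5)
Your chain of operator estimates is internally correct: if one grants a splitting $\mathfrak{A}=\Lambda\cup R$ with $S_{\Lambda}=I$, then indeed $S_{F}=I+S_{R}$, $\Vert S_{R}\Vert\leq\delta:=\int_{R}\Vert F_{\varsigma}\Vert^{2}d\mu(\varsigma)$, hence $A_{F}\geq 1$, $B_{F}\leq 1+\delta$, $\Vert I-S_{F}^{-1}\Vert\leq\delta$, and Proposition~\ref{p1} with $T=S_{F}^{-1}$ applies once $\delta^{2}(1+\delta)<1$. The genuine gap is the step you yourself flagged at the end, and it is worse than a delicacy of the continuous setting: the decomposition you postulate (an essential part whose frame operator is exactly, or even approximately, the identity) is not a formalization of ``redundant elements of small norm,'' and no such decomposition need exist. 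The paper's proof makes the intended reading explicit: $F=\{F_{\varsigma}\}_{\varsigma\in K}\cup\{F_{\varsigma}\}_{\varsigma\in K^{C}}$, where $\{F_{\varsigma}\}_{\varsigma\in K}$ is a continuous \emph{Riesz basis} and the redundant elements are those indexed by $K^{C}$. A Riesz basis can be arbitrarily ill-conditioned, and then your reduction collapses: take (say, with counting measure) $F=\{c_{i}e_{i}\}_{i}$ with $\{e_{i}\}_{i}$ orthonormal and $c_{i}\in\{1/10,10\}$. This is a Riesz basis, so it has no redundant elements at all ($\delta=0$) and falls squarely under the theorem; yet $\Vert I-S_{F}^{-1}\Vert=99$ while $A_{F}/B_{F}=10^{-4}$, so the hypothesis $\Vert I-S_{F}^{-1}\Vert^{2}<A_{F}/B_{F}$ of Proposition~\ref{p1} fails badly. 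Your argument therefore proves a genuinely weaker statement --- that a frame whose frame operator is a small positive perturbation of $I$ is woven with its canonical dual, i.e.\ a sufficient condition for the hypothesis of Theorem~\ref{t1} --- not the theorem as intended.

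The paper avoids this conditioning problem by running through a different key lemma: Theorem~\ref{t8}(2), which says that \emph{any} continuous Riesz basis is woven with its canonical dual. Its proof uses the distance criterion of Theorem~\ref{t}: for unit vectors $X\in\overline{span}\{F_{\varsigma}\}_{\varsigma\in K}$ and $Y\in\overline{span}\{S_{F}^{-1}F_{\varsigma}\}_{\varsigma\in K^{C}}$, biorthogonality of a Riesz basis with its canonical dual kills the cross term and yields $\Vert X-Y\Vert^{2}\geq 1$, with no reference whatsoever to how far $S_{F}$ is from $I$. The small-norm redundant elements are then treated as a perturbation of that weaving. Admittedly, the paper's own write-up of this final perturbation step is sketchy, but the Riesz-basis lemma is the load-bearing ingredient, and it covers exactly the case your route cannot reach. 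To repair your proposal you would have to measure the perturbation relative to the Riesz-basis part (e.g.\ bound the frame operator of a weaving of $F$ with $S_{F}^{-1}F$ from below by that of the corresponding weaving of the Riesz part, minus a term controlled by $\delta$), rather than relative to the identity operator.
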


\begin{proof}  Without loss of generality, we can write $ F=\{F_{\varsigma}\}_{\varsigma\in K}\cup \{F_{\varsigma}\}_{\varsigma\in K^{C}} $  where $ K \subset \mathfrak{A} $ and  $ F= \{F_{\varsigma}\}_{K} $ is a Riesz basis for $ \mathcal{H} $, then by Theorem \ref{t8}, $ F $ and $ S^{-1}_{F}F $ are continuous woven, then $ F $ and $ S_{F}F $ are continuous woven with the universal lower bound $ A_{F} $,
	and $$\int_{K}\Vert F_{\varsigma}\Vert^{2}d\mu(\varsigma) < \sqrt{\dfrac{A_{F}}{B_{F}}}.  $$ 
	Then,  $ F $ is continuous woven with its canonical dual.

\end{proof}

\begin{theorem}\label{t5} Let $ F= \{F_{\varsigma}\}_{\varsigma} $ and $ G=\{G_{\varsigma}\}_{\varsigma} $ be two continuous frames for $ \mathcal{H} $. The followings hold:
	\begin{itemize}
		\item [(1)] If $ S^{-1}_{F}\geq I $ and $ S_{F}S_{FK}= S_{FK}S_{F} $ for all $ K\subset \mathfrak{A} $, then $ \{F_{\varsigma}\}_{\varsigma} $ and $\{ S^{-1}_{F}F_{\varsigma} \}_{\varsigma}$ are continuous woven.
		\item[(2)] If $ F $ and $ G $ are two woven continuous Riesz bases and $ T_{1} $ and $ T_{2} $ are invertible operators so that 
		\begin{equation*}
			d_{FK, G K^{C}}> \max\{ \Vert T_{1}-T_{2}\Vert\;\Vert T_{1}^{-1}\Vert ,\Vert T_{1}-T_{2}\Vert\;\Vert T_{2}^{-1}\Vert\}
		\end{equation*} 
	\end{itemize} 
	where $ d_{FK, G K^{C}} $ is defined as in Theorem \ref{t}, then $ T_{1}F $ and $ T_{2}G $ are continuous woven.  
\end{theorem}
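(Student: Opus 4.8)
The plan is to handle the two assertions separately: for (1) I would extract a universal lower frame bound for the weaving by manipulating the frame operator spectrally, while for (2) I would reduce everything to the distance criterion of Theorem \ref{t}.

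For (1), I would fix an arbitrary measurable $K\subset\mathfrak{A}$ and rewrite the weaving Bessel sum in operator form. Writing $S_{FK}f=\int_{K}\langle f,F_\varsigma\rangle F_\varsigma\,d\mu(\varsigma)$ and using that $S_F^{-1}$ is self-adjoint, one has
\begin{align*}
\int_{K}|\langle f,F_\varsigma\rangle|^2\,d\mu(\varsigma)+\int_{K^C}|\langle f,S_F^{-1}F_\varsigma\rangle|^2\,d\mu(\varsigma) &=\langle S_{FK}f,f\rangle+\langle S_{FK^C}S_F^{-1}f,S_F^{-1}f\rangle\\
&=\langle S_{FK}f,f\rangle+\langle S_{FK^C}S_F^{-2}f,f\rangle,
\end{align*}
where the last equality uses the hypothesis $S_FS_{FK^C}=S_{FK^C}S_F$, so that $S_F^{-1}$, hence $S_F^{-2}$, commutes with $S_{FK^C}$. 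Now $S_F^{-1}\geq I$ forces $S_F^{-2}\geq I$, and since $S_F^{-2}-I\geq 0$ commutes with the positive operator $S_{FK^C}$, the product $S_{FK^C}(S_F^{-2}-I)=S_{FK^C}^{1/2}(S_F^{-2}-I)S_{FK^C}^{1/2}$ is positive. Hence $\langle S_{FK^C}S_F^{-2}f,f\rangle\geq\langle S_{FK^C}f,f\rangle$, and the whole sum is at least $\langle(S_{FK}+S_{FK^C})f,f\rangle=\langle S_Ff,f\rangle\geq A_F\|f\|^2$. This yields the universal lower bound $A_F$, and the upper bound is furnished automatically by Proposition \ref{p}.

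For (2), the plan is to invoke Theorem \ref{t}. Since $T_1,T_2$ are bounded and invertible, $T_1F$ and $T_2G$ are again continuous Riesz bases, and each $T_i$ being a homeomorphism gives $\overline{span}\{T_1F_\varsigma\}_{\varsigma\in K}=T_1\,\overline{span}\{F_\varsigma\}_{\varsigma\in K}$ and $\overline{span}\{T_2G_\varsigma\}_{\varsigma\in K^C}=T_2\,\overline{span}\{G_\varsigma\}_{\varsigma\in K^C}$. So by Theorem \ref{t} it suffices to produce a single $t'>0$ with $d(T_1U_1,T_2U_2)\geq t'$ for every $K$, where $U_1=\overline{span}\{F_\varsigma\}_{\varsigma\in K}$ and $U_2=\overline{span}\{G_\varsigma\}_{\varsigma\in K^C}$. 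The core estimate is to compare $\|T_1u_1-T_2u_2\|$ with $\|u_1-u_2\|$: using the two splittings $T_1u_1-T_2u_2=T_1(u_1-u_2)+(T_1-T_2)u_2=T_2(u_1-u_2)+(T_1-T_2)u_1$ together with $\|T_i(u_1-u_2)\|\geq\|u_1-u_2\|/\|T_i^{-1}\|$, one bounds the weaving distance of $T_1F,T_2G$ below by $d_{FK,GK^C}$ minus a term controlled by $\|T_1-T_2\|\,\|T_i^{-1}\|$. Since the subtracted quantity is independent of $K$ and, by Theorem \ref{t}(3), $d_{FK,GK^C}$ is bounded below uniformly in $K$, the hypothesis $d_{FK,GK^C}>\max\{\|T_1-T_2\|\|T_1^{-1}\|,\|T_1-T_2\|\|T_2^{-1}\|\}$ gives a uniform positive lower bound, and a final appeal to Theorem \ref{t} closes the argument.

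The main obstacle is precisely this last estimate. The distance $d$ normalizes only one of the two vectors, so when passing from $u_i$ to $v_i=T_iu_i$ the non-normalized vector can have its norm distorted by $T_i$; one must therefore work with the minimizing (orthogonal projection) vector, whose norm is at most that of the normalized vector and hence controlled by $\|T_i^{-1}\|$, and treat $d_{V_1}(V_2)$ and $d_{V_2}(V_1)$ separately so that the two factors $\|T_1^{-1}\|$ and $\|T_2^{-1}\|$ surface, matching the two entries of the maximum. Making the constants land exactly on the stated right-hand side, rather than on a cruder multiple of it, is the delicate point of the computation.
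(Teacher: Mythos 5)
Your proposal follows essentially the same route as the paper for both parts: in (1) you show the weaving frame operator dominates $S_F$ using commutativity and $S_F^{-1}\geq I$ (your identity $\langle S_{FK^C}S_F^{-2}f,f\rangle\geq\langle S_{FK^C}f,f\rangle$ is exactly the paper's decomposition $S_{\mathcal{F}_K}=S_F+(S_F^{-1}-I)S_{FK^C}(I+S_F^{-1})$ rewritten under commutativity, and your factorization $S_{FK^C}^{1/2}(S_F^{-2}-I)S_{FK^C}^{1/2}$ actually justifies the positivity more carefully than the paper's bare assertion); in (2) you use the same splitting $T_1u_1-T_2u_2=T_i(u_1-u_2)+(T_1-T_2)u_j$ and the same lower bounds via $\|T_i^{-1}\|^{-1}$, concluding through Theorem \ref{t}.

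One point in your conclusion of (2) does not hold as stated: you claim that the uniform constant $t$ from Theorem \ref{t}(3) for the pair $(F,G)$, combined with the pointwise hypothesis $d_{FK,GK^C}>\max\{\|T_1-T_2\|\|T_1^{-1}\|,\|T_1-T_2\|\|T_2^{-1}\|\}$, yields a \emph{uniform} positive lower bound for $d_{T_1FK,T_2GK^C}$. This inference fails in general: the $t$ furnished by Theorem \ref{t}(3) need not exceed the maximum on the right-hand side, and the strict pointwise inequality does not prevent $\inf_K d_{FK,GK^C}$ from equaling that maximum, in which case your uniform bound degenerates to $0$. The gap is harmless, however, because your own estimate already gives $d_{T_1FK,T_2GK^C}>0$ for \emph{each} $K$, and condition (2) of Theorem \ref{t} (equivalently, the weak-woven/woven equivalence) says pointwise positivity suffices; this is precisely how the paper closes the argument, with constants $d_1,d_2$ that depend on $K$.
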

\begin{proof}For $ (1) $, we consider $ F_{K}= \{F_{\varsigma}\} _{\varsigma\in K}\cup \{S^{-1}_{F}F_{\varsigma}\}_{\varsigma\in K^{C}}$. Then $ F_{K} $ is a Bessel sequence for all $ K\subset \mathfrak{A} $, and 
	\begin{align*}
		S_{FK}f&=\int_{K}\langle f,F_{\varsigma}\rangle F_{\varsigma}d\mu(\varsigma)+\int_{K^{C}}\langle f,S^{-1}_{F}F_{\varsigma}\rangle S^{-1}_{F}F_{\varsigma}d\mu(\varsigma)\\
		&= S_{FK}f+ S^{-1}_{F}S_{FK^{C}}S^{-1}_{F}f\\
		&= S_{F}f-S_{FK^{C}}f+  S^{-1}_{F}S_{FK^{C}}S^{-1}_{F}f\\
		&=S_{F}f+(S^{-1}_{F}-I)S_{FK^{C}}(I+S^{-1}_{F})f,\;\;\;\forall f\in\mathcal{H}.
		\end{align*} 
 Since $ (S^{-1}_{F}-I)S_{FK^{C}}(I+S^{-1}_{F}) $ is a positive operator , we obtain 
	\begin{equation*}
		S_{FK}\geq S_{F}.
	\end{equation*}
Then, $ T^{\ast}_{FK}$ is injective and $ F_{K} $ is a continuous frame for all $ K $. Hence, $ (1) $ is obtained.

For $ (2) $, let $ f= \int_{K} \alpha_{\varsigma}T_{1}F_{\varsigma} d\mu(\varsigma)$ and $g=\int_{K^{C}}\beta_{\varsigma}T_{2} G_{\varsigma}d\mu(\varsigma)$ with $ \Vert g \Vert=1$  then
\begin{align*}
	\Vert f-g\Vert &=\Vert \int_{K} \alpha_{\varsigma}T_{1}F_{\varsigma} d\mu(\varsigma)-\int_{K^{C}}\beta_{\varsigma}T_{2} G_{\varsigma}d\mu(\varsigma)\Vert \\
	&=\Vert \int_{K} \alpha_{\varsigma}T_{1}F_{\varsigma} d\mu(\varsigma)-\int_{K^{C}}\beta_{\varsigma}T_{1} G_{\varsigma}d\mu(\varsigma)+ \int_{K^{C}} \beta_{\varsigma}T_{1}G_{\varsigma} d\mu(\varsigma)-\int_{K^{C}}\beta_{\varsigma}T_{2} G_{\varsigma}d\mu(\varsigma)\Vert \\
	 &\geq \Vert T_{1}( \int_{K} \alpha_{\varsigma} F_{\varsigma}d\mu(\varsigma)- \int_{K^{C}}\beta_{\varsigma} G_{\varsigma}d\mu(\varsigma))\Vert-\Vert (T_{1}-T_{2}) \int_{K^{C}}\beta_{\varsigma}G_{\varsigma}d\mu(\varsigma) \Vert\\
	&\geq \Vert T_{1}^{-1}\Vert^{-1} \;\Vert \int_{K^{C}}\beta_{\varsigma}G_{\varsigma}d\mu(\varsigma)\Vert \;\Vert \dfrac{\int_{K}\alpha_{\varsigma}F_{\varsigma d\mu(\varsigma)}}{\Vert \int_{K^{C}}\beta_{\varsigma}G_{\varsigma}d\mu(\varsigma)\Vert}- \dfrac{
		\int_{K^{C}}\beta_{\varsigma}G_{\varsigma}d\mu(\varsigma)}{\Vert \int_{K^{C}}\beta_{\varsigma}G_{\varsigma}d\mu(\varsigma)\Vert}\Vert\\
	& -\Vert T_{1}-T_{2}\Vert \;\Vert \int_{K^{C}}\beta_{\varsigma}G_{\varsigma}d\mu(\varsigma)\Vert\\
	&\geq (d_{FK,GK^{C}}\Vert T_{1} \Vert^{-1}-\Vert T_{1}-T_{2}\Vert)\;\Vert \int_{K^{C}}\beta_{\varsigma}G_{\varsigma}d\mu(\varsigma)\Vert\\
	&\geq  (d_{FK,GK^{C}}\Vert T_{1} \Vert^{-1}-\Vert T_{1}-T_{2}\Vert)\Vert T_{2}\Vert ^{-1}>0.
\end{align*}
If $ \Vert f\Vert =1 $, then we obtain
\begin{align*}
	\Vert f-g\Vert	&=\Vert \int_{K} \alpha_{\varsigma}T_{1}F_{\varsigma} d\mu(\varsigma)-\int_{K}\alpha_{\varsigma}T_{1} F_{\varsigma}d\mu(\varsigma)+ \int_{K} \alpha_{\varsigma}T_{2}F_{\varsigma} d\mu(\varsigma)-\int_{K^{C}}\beta_{\varsigma}T_{2} G_{\varsigma}d\mu(\varsigma)\Vert \\
	&\geq \Vert T_{2}( \int_{K} \alpha_{\varsigma} F_{\varsigma}d\mu(\varsigma)- \int_{K^{C}}\beta_{\varsigma} G_{\varsigma}d\mu(\varsigma))\Vert-\Vert (T_{1}-T_{2}) \int_{K}\alpha_{\varsigma}G_{\varsigma}d\mu(\varsigma) \Vert\\
	&\geq  (d_{FK,GK^{C}}\Vert T_{2}^{-1} \Vert^{-1}-\Vert T_{1}-T_{2}\Vert)\Vert T_{1}\Vert ^{-1}>0.
\end{align*}
By considering

\begin{equation*}
	d_{1}= (d_{FK,GK^{C}} \Vert T^{-1}_{1}\Vert ^{-1}- \Vert T_{1}-T_{2}\Vert) \Vert T_{2}\Vert^{-1}
\end{equation*}
and
\begin{equation*}
	 d_{2}=(d_{FK,GK^{C}}\Vert T^{-1}_{2}\Vert ^{-1}- \Vert T_{1}-T_{2}\Vert) \Vert T_{2}\Vert^{-1}
\end{equation*}
we obtain $ d_{T_{1}FK,T_{2}GK^{C}} \geq  \min\{ d_{1},d_{2}\}>0$. Thus, the result is obtained.
\end{proof}
A consequence of the above theorem is that the canonical duals of two woven  continuous frames are continuous woven.

\begin{corollary}
	Let $ F=\{F_{\varsigma}\}_{\varsigma} $ and $ G= \{G_{\varsigma}\}_{\varsigma} $ be two Riesz bases for $ \mathcal{H} $, so that for every $ K\subset \mathfrak{A} $ 
	\begin{equation*}
		d_{FK,GK^{C}}> \max\{ \Vert S^{-1}_{F}-S^{-1}_{G}\Vert \;\Vert S_{F}\Vert ,\Vert S^{-1}_{F}- S^{-1}_{G}\Vert \;\Vert S_{G}\Vert\}.
	\end{equation*}
Then, $ S^{-1}_{F} F$ and $ S^{-1}_{G} G $ are also continuous woven.
\end{corollary}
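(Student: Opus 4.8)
The plan is to recognize this corollary as the specialization of Theorem \ref{t5}(2) obtained by taking the two invertible operators to be the inverse frame operators, $ T_{1}= S^{-1}_{F} $ and $ T_{2}= S^{-1}_{G} $. Since $ F $ and $ G $ are Riesz bases, their frame operators $ S_{F} $ and $ S_{G} $ are bounded, self-adjoint, and invertible, so $ T_{1} $ and $ T_{2} $ lie in $ B(\mathcal{H}) $ and are invertible with $ T_{1}^{-1}= S_{F} $ and $ T_{2}^{-1}= S_{G} $. With these choices $ T_{1}F= \{S^{-1}_{F}F_{\varsigma}\}_{\varsigma} $ and $ T_{2}G= \{S^{-1}_{G}G_{\varsigma}\}_{\varsigma} $ are exactly the canonical duals appearing in the conclusion.

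Before invoking Theorem \ref{t5}(2) I first need to check its standing hypothesis that $ F $ and $ G $ are \emph{woven} Riesz bases, which is not assumed outright in the corollary. This is forced by the stated distance inequality: for every $ K\subset\mathfrak{A} $,
\[
d_{FK,GK^{C}} > \max\{ \Vert S^{-1}_{F}-S^{-1}_{G}\Vert \, \Vert S_{F}\Vert , \Vert S^{-1}_{F}- S^{-1}_{G}\Vert \, \Vert S_{G}\Vert\} \geq 0 ,
\]
so in particular $ d_{FK,GK^{C}}>0 $ for every $ K $. By the equivalence $ (1)\Leftrightarrow(2) $ of Theorem \ref{t}, this already guarantees that $ F $ and $ G $ are continuous woven, so the hypotheses of Theorem \ref{t5}(2) are met.

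The final step is to substitute the chosen operators into the hypothesis of Theorem \ref{t5}(2). Using $ T_{1}-T_{2}= S^{-1}_{F}-S^{-1}_{G} $ together with $ T_{1}^{-1}=S_{F} $ and $ T_{2}^{-1}=S_{G} $, the condition
\[
d_{FK,GK^{C}} > \max\{\Vert T_{1}-T_{2}\Vert\,\Vert T_{1}^{-1}\Vert, \Vert T_{1}-T_{2}\Vert\,\Vert T_{2}^{-1}\Vert\}
\]
becomes verbatim the inequality assumed in the corollary. Hence Theorem \ref{t5}(2) applies and yields that $ T_{1}F= S^{-1}_{F}F $ and $ T_{2}G= S^{-1}_{G}G $ are continuous woven, which is precisely the assertion.

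Since all the machinery is in place, there is no genuine obstacle; the only points requiring care are (i) confirming that the wovenness of $ F $ and $ G $ — a hypothesis of Theorem \ref{t5}(2) that the corollary does not list explicitly — is automatically forced by the strict positivity of the distances, and (ii) matching the operator norms exactly, namely recognizing $ T_{i}^{-1} $ as $ S_{F} $ and $ S_{G} $ (rather than as $ S_{F}^{-1} $ and $ S_{G}^{-1} $), so that the two sides of the inequality align with the corollary's statement.
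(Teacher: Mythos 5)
Your proposal is correct and is essentially the paper's approach: the paper also treats this corollary as Theorem \ref{t5}(2) specialized to $T_{1}=S_{F}^{-1}$, $T_{2}=S_{G}^{-1}$, except that instead of citing that theorem as a black box it re-runs the whole chain of inequalities with these operators substituted. Your version is tighter, and your point (i) --- deducing the standing wovenness hypothesis of Theorem \ref{t5}(2) from the strict positivity of $d_{FK,GK^{C}}$ for every $K$ via the equivalence in Theorem \ref{t} --- explicitly fills a step that the paper's statement and proof pass over in silence.
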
 
\begin{proof}
	By Theorem \ref{t5}, we consider  $ f= \int_{K} \alpha_{\varsigma}S^{-1}_{F}F_{\varsigma} d\mu(\varsigma)$ and $g=\int_{K^{C}}\beta_{\varsigma}S^{-1}_{G} G_{\varsigma}d\mu(\varsigma)$ with $ \Vert g \Vert=1$ then 
	\begin{align*}
		\Vert f-g\Vert &=\Vert \int_{K} \alpha_{\varsigma}S^{-1}_{F}F_{\varsigma} d\mu(\varsigma)-\int_{K^{C}}\beta_{\varsigma}S^{-1}_{G} G_{\varsigma}d\mu(\varsigma)\Vert \\
		&=\Vert \int_{K} \alpha_{\varsigma}S_{F}^{-1}F_{\varsigma} d\mu(\varsigma)-\int_{K^{C}}\beta_{\varsigma}S^{-1}_{F} G_{\varsigma}d\mu(\varsigma)+ \int_{K^{C}} \beta_{\varsigma}S^{-1}_{F}G_{\varsigma} d\mu(\varsigma)-\int_{K^{C}}\beta_{\varsigma}S^{-1}_{G} G_{\varsigma}d\mu(\varsigma)\Vert \\
		&\geq \Vert S^{-1}_{F}( \int_{K} \alpha_{\varsigma} F_{\varsigma}d\mu(\varsigma)- \int_{K^{C}}\beta_{\varsigma} G_{\varsigma}d\mu(\varsigma))\Vert-\Vert (S_{F}^{-1}-S_{G}^{-1}) \int_{K^{C}}\beta_{\varsigma}G_{\varsigma}d\mu(\varsigma) \Vert\\
		&\geq \Vert S_{F}\Vert^{-1} \;\Vert \int_{K^{C}}\beta_{\varsigma}G_{\varsigma}d\mu(\varsigma)\Vert \;\Vert \dfrac{\int_{K}\alpha_{\varsigma}F_{\varsigma d\mu(\varsigma)}}{\Vert \int_{K^{C}}\beta_{\varsigma}G_{\varsigma}d\mu(\varsigma)\Vert}- \dfrac{
			\int_{K^{C}}\beta_{\varsigma}G_{\varsigma}d\mu(\varsigma)}{\Vert \int_{K^{C}}\beta_{\varsigma}G_{\varsigma}d\mu(\varsigma)\Vert}\Vert\\
		& -\Vert S_{F}^{-1}-S_{G}^{-1}\Vert \;\Vert \int_{K^{C}}\beta_{\varsigma}G_{\varsigma}d\mu(\varsigma)\Vert\\
		&\geq (d_{FK,GK^{C}}\Vert S_{F} \Vert^{-1}-\Vert S_{F}^{-1}-S_{G}^{-1}\Vert)\;\Vert \int_{K^{C}}\beta_{\varsigma}G_{\varsigma}d\mu(\varsigma)\Vert\\
		&\geq  (d_{FK,GK^{C}}\Vert S_{F} \Vert^{-1}-\Vert S_{F}^{-1}-S_{G}^{-1}\Vert)\Vert S_{G}\Vert ^{-1}>0.
	\end{align*}
If $ \Vert f\Vert =1 $, then we obtain
\begin{align*}
	\Vert f-g\Vert	&=\Vert \int_{K} \alpha_{\varsigma}S_{F}^{-1}F_{\varsigma} d\mu(\varsigma)-\int_{K}\alpha_{\varsigma}S_{F}^{-1} F_{\varsigma}d\mu(\varsigma)+ \int_{K} \alpha_{\varsigma}S_{G}^{-1}F_{\varsigma} d\mu(\varsigma)-\int_{K^{C}}\beta_{\varsigma}T_{2} G_{\varsigma}d\mu(\varsigma)\Vert \\
	&\geq \Vert S_{G}^{-1}( \int_{K} \alpha_{\varsigma} F_{\varsigma}d\mu(\varsigma)- \int_{K^{C}}\beta_{\varsigma} G_{\varsigma}d\mu(\varsigma))\Vert-\Vert (S_{F}^{-1}-S_{G}^{-1}) \int_{K}\alpha_{\varsigma}G_{\varsigma}d\mu(\varsigma) \Vert\\
	&\geq  (d_{FK,GK^{C}}\Vert S_{F} \Vert^{-1}-\Vert S_{F}^{-1}-S_{G}^{-1}\Vert)\Vert S_{F}\Vert ^{-1}>0.
\end{align*}

By considering

\begin{equation*}
	d_{1}= (d_{FK,GK^{C}} \Vert S^{-1}_{F}\Vert ^{-1}- \Vert S_{F}-S_{G}\Vert) \Vert S_{G}\Vert^{-1}
\end{equation*}
\begin{equation*}
	d_{2}=(d_{FK,GK^{C}}\Vert S^{-1}_{G}\Vert ^{-1}- \Vert S_{F}-S_{G}\Vert) \Vert S_{G}\Vert^{-1}
\end{equation*}
we obtain $ d_{S_{F}FK,S_{G}GK^{C}} \geq  \min\{ d_{1},d_{2}\}>0$. Thus, the result is obtained.
\end{proof}
In the following, we obtain some invertible operators $ T $ for which $ F $ and $ TF $ are  woven continuous frames and we give some conditions which continuous frames with their perturbations constitute woven continuous frames.

Let $ e=\{e_{i}\}_{i=1}^{n} $ and $ h=\{h_{i}\}_{i=1}^{n} $ be orthonormal bases for $ \mathcal{H}_{n} $. 
Also, let $a= \{a_{i}\}_{i=1}^{n} $ and $b= \{b_{i}\}_{i=1}^{n} $ be sequences of positive constants.

An operator $  T: \mathcal{H}_{n} \rightarrow \mathcal{H}_{n}$ is called admissible for $ (e,h,a,b) $, if there exists an orthonormal basis $ \{\lambda_{i}\} _{i=1}^{n}$ for $ \mathcal{H}_{n} $ satisfying 

\begin{equation*}
	T^{\ast}e_{i}=\sum_{j=1}^{n}\sqrt{\dfrac{a_{j}}{b_{j}}}\langle e_{i},h_{j}\rangle h_{j},\;(i=1,2,...,n)
\end{equation*}
 $ TF $ is a continuous frame for $ \mathcal{H}_{n} $ with the frame operator $ S_{TF} = TS_{F}T^{\ast}$. By considering $ \mathcal{F} = \{F_{j}\}_{j\in K} \cup \{TF_{j}\}_{j\in K^{c}}$, we obtain 
 \begin{equation*}
 	S_{\mathcal{F}K}= S_{FK}+S_{TFK^{C}}\; for \;K \subset \{1,2,...,n\},
 \end{equation*}
where $ S_{FK} $ and $ S_{TFK^{C}} $ are the continuous frames operators of $\{ F_{j}\}_{j}$ and $ \{T_{F_{j}}\}_{j} $ , respectively. Then, $ S_{\mathcal{F}_{K} }$  the continuous frame operator of $ \mathcal{F}_{K} $, is a positive and invertible operator on $ \mathcal{H}_{n} $ and so $F  $ and $T_{F}  $ are continuous woven.
\begin{theorem} \label{t4}Let $ F =\{F_{\varsigma}\}_{\varsigma\in\mathfrak{A}}$ and $ G=\{G_{\varsigma}\} _{\varsigma\in\mathfrak{A}}$ are two  continuous frames so that for all sequences of scalars $ \{\alpha_{\varsigma}\}_{\varsigma\in\mathfrak{A}} $, we have 
	\begin{equation*}
		\Vert \int_{\mathfrak{A}} \alpha_{\varsigma}(F_{\varsigma}-G_{\varsigma})d\mu(\varsigma)\Vert\leq a\Vert \int_{\mathfrak{A}}\alpha_{\varsigma} F_{\varsigma}d\mu(\varsigma)\Vert +b\Vert \int_{\mathfrak{A}}\alpha_{\varsigma} G_{\varsigma}d\mu(\varsigma)\Vert+c ( \int_{\mathfrak{A}} \vert \alpha_{\varsigma}\vert^{2} d\mu(\varsigma))^{\frac{1}{2}}
	\end{equation*} 
	for some positive numbers $ a,b,c $ such that 
	\begin{equation*}
		a \sqrt{B_{F}}+b\sqrt{B_{G}}+c < \sqrt{A_{F}}.
	\end{equation*}
Then, $ F $ and $ G $ are continuous woven.
\end{theorem}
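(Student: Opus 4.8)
The plan is to reduce the weaving condition to a bound on synthesis operators and then transfer it to the analysis side by taking adjoints. Fix an arbitrary measurable $ K \subset \mathfrak{A} $ and let $ T_{K} $ denote the synthesis operator of the weaving $ \{F_{\varsigma}\}_{\varsigma\in K}\cup\{G_{\varsigma}\}_{\varsigma\in K^{C}} $, so that $ T_{K}\alpha = \int_{K} \alpha_{\varsigma} F_{\varsigma}\,d\mu(\varsigma) + \int_{K^{C}}\alpha_{\varsigma} G_{\varsigma}\,d\mu(\varsigma) $. The upper frame bound is immediate: since $ F $ and $ G $ are both Bessel, Proposition \ref{p} shows the weaving is Bessel with bound $ B_{F}+B_{G} $, uniformly in $ K $. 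The real content is the universal lower bound, and for that I would compare $ T_{K} $ with the synthesis operator $ T_{F} $ of $ F $.

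First I would compute the difference $ T_{F}\alpha - T_{K}\alpha = \int_{K^{C}}\alpha_{\varsigma}(F_{\varsigma} - G_{\varsigma})\,d\mu(\varsigma) $, which is exactly the quantity controlled by the hypothesis once the latter is applied to the sequence obtained by restricting $ \alpha $ to $ K^{C} $. Feeding that restricted sequence into the perturbation inequality and then estimating $ \Vert \int_{K^{C}}\alpha_{\varsigma} F_{\varsigma}\,d\mu(\varsigma)\Vert \leq \sqrt{B_{F}}\,(\int_{K^{C}}\vert\alpha_{\varsigma}\vert^{2}\,d\mu(\varsigma))^{1/2} $ together with the analogous bound for $ G $ (both being the operator-norm form of the Bessel upper bounds $ \Vert T_{F}\Vert\leq\sqrt{B_{F}} $ and $ \Vert T_{G}\Vert\leq\sqrt{B_{G}} $), I obtain $ \Vert T_{F}\alpha - T_{K}\alpha\Vert \leq (a\sqrt{B_{F}}+b\sqrt{B_{G}}+c)\Vert\alpha\Vert_{L^{2}} $. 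Writing $ \lambda := a\sqrt{B_{F}}+b\sqrt{B_{G}}+c $, this says $ \Vert T_{F} - T_{K}\Vert \leq \lambda $ as operators on $ L^{2}(\mathfrak{A},\mu) $, with $ \lambda < \sqrt{A_{F}} $ by hypothesis and, crucially, independent of $ K $.

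The key step is to pass to adjoints. Since $ \Vert T_{F}^{\ast} - T_{K}^{\ast}\Vert = \Vert T_{F} - T_{K}\Vert \leq \lambda $ and the lower frame bound of $ F $ is precisely the statement $ \Vert T_{F}^{\ast} f\Vert \geq \sqrt{A_{F}}\,\Vert f\Vert $, the reverse triangle inequality gives $ \Vert T_{K}^{\ast} f\Vert \geq \Vert T_{F}^{\ast} f\Vert - \Vert T_{F}^{\ast} f - T_{K}^{\ast} f\Vert \geq (\sqrt{A_{F}}-\lambda)\Vert f\Vert $ for every $ f \in \mathcal{H} $. Because $ T_{K}^{\ast} f $ is exactly the analysis of $ f $ under the woven system, squaring yields $ \int_{K}\vert\langle f,F_{\varsigma}\rangle\vert^{2}\,d\mu(\varsigma) + \int_{K^{C}}\vert\langle f,G_{\varsigma}\rangle\vert^{2}\,d\mu(\varsigma) \geq (\sqrt{A_{F}}-\lambda)^{2}\Vert f\Vert^{2} $, a lower frame bound independent of $ K $. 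Combined with the universal Bessel bound, this shows every weaving is a continuous frame with the common bounds $ (\sqrt{A_{F}}-\lambda)^{2} $ and $ B_{F}+B_{G} $, so $ F $ and $ G $ are continuous woven.

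I expect the only delicate point to be the bookkeeping that lets the hypothesis, stated for all scalar sequences, be applied to the truncation of $ \alpha $ to $ K^{C} $ so that the restriction to $ K^{C} $ appears on both sides of the inequality; everything afterwards is the standard observation that a synthesis-operator perturbation of size less than $ \sqrt{A_{F}} $ survives the passage to the adjoint and therefore preserves the lower bound. One should also verify that $ T_{K}^{\ast} $ genuinely computes the mixed analysis coefficients and that $ \Vert T_{F}\beta\Vert\leq\sqrt{B_{F}}\Vert\beta\Vert_{L^{2}} $ is legitimately the Bessel upper bound, but these are routine.
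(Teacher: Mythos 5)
Your proof is correct and follows essentially the same perturbation argument as the paper: both apply the hypothesis to sequences supported on $K^{C}$ to get the uniform bound $\lambda = a\sqrt{B_{F}}+b\sqrt{B_{G}}+c < \sqrt{A_{F}}$ on the difference of (synthesis) operators, then deduce the universal lower bound $(\sqrt{A_{F}}-\lambda)^{2}$ by the reverse triangle inequality against the lower bound of $F$, with the upper bound $B_{F}+B_{G}$ coming from Bessel-ness. The only difference is presentational: you make explicit the passage to adjoints that transfers the synthesis-side hypothesis to the analysis side, a step the paper performs implicitly (and somewhat loosely) while manipulating the frame coefficients directly.
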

\begin{proof} Consider $ T_{FK^{C}} $ and  $ T_{GK^{C}} $ as the synthesis operators of Bessel sequences $ \{F_{\varsigma}\}_{\varsigma\in K} $ and $ \{G_{\varsigma}\}_{\varsigma\in K^{C}} $, respectively. Then, for every $ K \subset \mathfrak{A} $ and $ f\in \mathcal{H} $, we have
	 
\begin{align*}
&	(\int_{K}\vert  \langle  f,F_{\varsigma})\rangle \vert ^{2}d\mu(\varsigma)+ \int_{K^{C}} \vert \langle f,G_{\varsigma}\rangle\vert ^{2}d\mu(\varsigma))^{1/2}\\
&= (\int_{K}\vert  \langle  f,F_{\varsigma})\rangle \vert ^{2}d\mu(\varsigma)+ \int_{K^{C}} \vert \langle f,G_{\varsigma}\rangle- \langle f ,F_{\varsigma}-G_{\varsigma}\rangle \vert ^{2}d\mu(\varsigma))^{1/2}\\
&\geq (\int_{K}\vert  \langle  f,F_{\varsigma})\rangle \vert ^{2}d\mu(\varsigma))^{1/2}-\int_{K^{C}} \vert  \langle f ,F_{\varsigma}-G_{\varsigma}\rangle \vert ^{2}d\mu(\varsigma))^{1/2}\\
&\geq  \sqrt{A_{F}}\Vert f\Vert  -\Vert T_{FK^{C}}f - T_{GK^{C}}f \Vert\\
&\geq (\sqrt{A_{F}}-\Vert T_{FK^{C}}f - T_{GK^{C}})\Vert f\Vert\\
&\geq  (\sqrt{A_{F}}-a\Vert T_{F}\Vert -b\Vert T_{G} \Vert -c)\Vert f\Vert \\
&\geq (\sqrt{A_{F}}-a \sqrt{B_{F}}-b\sqrt{B_{G}} -c)\Vert f\Vert.
\end{align*}
By $  (\sqrt{A_{F}}-a \sqrt{B_{F}}-b\sqrt{B_{G}} -c)>0 $, the lower bound is obtained.

Clearly $ \{F_{j}\}_{j\in K} \cup \{G_{j}\}_{j\in K^{C}}$ is Bessel with an upper bound $ B_{F} +B_{G}$.
\end{proof}

Applying Theorem \ref{t4}, we will obtain the following results. 

\begin{corollary} Let $ F=\{F_{\varsigma}\}_{\varsigma\in\mathfrak{A}} $ is a continuous frame for $\mathcal{H}$ and $0\not=f\in\mathcal{H} $. Also, $ \{a_{\varsigma}\}_{\varsigma \in\mathfrak{A}} $ be a sequence of scalars so that
	\begin{equation*}
		\int_{\mathfrak{A}}\vert a_{\varsigma}\vert ^{2}d\mu(\varsigma) < b \dfrac{A_{F}}{\Vert f\Vert^{2
		}},
	\end{equation*} 
	for some $ b<1 $. Then, $ \{F_{\varsigma}\}_{\varsigma \in \mathfrak{A}} $ and $ \{F_{\varsigma}+a_{\varsigma }f\}_{\varsigma \in\mathfrak{A}}$ are continuous woven.
	  
\end{corollary}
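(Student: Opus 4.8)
The plan is to apply Theorem \ref{t4} with the second frame taken to be $G=\{G_{\varsigma}\}_{\varsigma\in\mathfrak{A}}$ where $G_{\varsigma}:=F_{\varsigma}+a_{\varsigma}f$. A preliminary point is that Theorem \ref{t4} presupposes that both $F$ and $G$ are continuous frames, so I would first confirm that $G$ is indeed a continuous frame. The difference of synthesis operators $T_{F}-T_{G}$ sends $\{\alpha_{\varsigma}\}$ to the vector $-f\int_{\mathfrak{A}}\alpha_{\varsigma}a_{\varsigma}\,d\mu(\varsigma)$, which is a rank-one map of operator norm $\Vert f\Vert\bigl(\int_{\mathfrak{A}}|a_{\varsigma}|^{2}d\mu(\varsigma)\bigr)^{1/2}$. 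By the hypothesis (with $b<1$) this norm is strictly smaller than $\sqrt{A_{F}}$, whence for every $x\in\mathcal{H}$
\[
\Vert T_{G}^{\ast}x\Vert\geq \Vert T_{F}^{\ast}x\Vert-\Vert T_{F}-T_{G}\Vert\,\Vert x\Vert\geq\bigl(\sqrt{A_{F}}-\Vert T_{F}-T_{G}\Vert\bigr)\Vert x\Vert,
\]
giving a positive lower frame bound for $G$, with the upper bound following in the same way. Thus $G$ is a continuous frame.

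The key step is then to verify the perturbation inequality required by Theorem \ref{t4}. Since $F_{\varsigma}-G_{\varsigma}=-a_{\varsigma}f$, for any scalar sequence $\{\alpha_{\varsigma}\}$ I would estimate, using the Cauchy--Schwarz inequality,
\[
\Vert\int_{\mathfrak{A}}\alpha_{\varsigma}(F_{\varsigma}-G_{\varsigma})\,d\mu(\varsigma)\Vert=\Vert f\Vert\,\Bigl|\int_{\mathfrak{A}}\alpha_{\varsigma}a_{\varsigma}\,d\mu(\varsigma)\Bigr|\leq\Vert f\Vert\Bigl(\int_{\mathfrak{A}}|a_{\varsigma}|^{2}d\mu(\varsigma)\Bigr)^{1/2}\Bigl(\int_{\mathfrak{A}}|\alpha_{\varsigma}|^{2}d\mu(\varsigma)\Bigr)^{1/2}.
\]
This is exactly the hypothesis of Theorem \ref{t4} with the choices $a=0$, $b=0$ (these being the Theorem's constants, not the scalar $b$ appearing in the corollary), and $c=\Vert f\Vert\bigl(\int_{\mathfrak{A}}|a_{\varsigma}|^{2}d\mu(\varsigma)\bigr)^{1/2}$.

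Finally I would check the smallness condition $a\sqrt{B_{F}}+b\sqrt{B_{G}}+c<\sqrt{A_{F}}$ of Theorem \ref{t4}, which here collapses to $c<\sqrt{A_{F}}$, i.e. $\Vert f\Vert^{2}\int_{\mathfrak{A}}|a_{\varsigma}|^{2}d\mu(\varsigma)<A_{F}$. The corollary's hypothesis gives $\int_{\mathfrak{A}}|a_{\varsigma}|^{2}d\mu(\varsigma)<b\,A_{F}/\Vert f\Vert^{2}$ with $b<1$, so $\Vert f\Vert^{2}\int_{\mathfrak{A}}|a_{\varsigma}|^{2}d\mu(\varsigma)<bA_{F}<A_{F}$, as needed, and Theorem \ref{t4} yields that $F$ and $G$ are continuous woven.

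I expect the only genuine subtlety to be the preliminary verification that $G$ is a continuous frame, since the corollary's hypotheses do not state this outright and it must be extracted from the perturbation bound $\Vert T_{F}-T_{G}\Vert<\sqrt{A_{F}}$; the remaining steps are a single Cauchy--Schwarz estimate and a routine arithmetic check. I would also flag the notational clash between the scalar $b$ in the corollary and the unrelated constant $b$ in Theorem \ref{t4}, and simply set the latter to $0$ to avoid confusion.
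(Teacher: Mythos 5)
Your proof is correct and takes essentially the same route as the paper: the paper likewise verifies the Bessel bound $(\sqrt{B_{F}}+\Vert \{a_{\varsigma}\}\Vert\,\Vert f\Vert)^{2}$, makes the identical Cauchy--Schwarz estimate, and invokes Theorem \ref{t4} with the constants $a=b=0$ and $c=\Vert f\Vert\bigl(\int_{\mathfrak{A}}\vert a_{\varsigma}\vert^{2}d\mu(\varsigma)\bigr)^{1/2}<\sqrt{bA_{F}}<\sqrt{A_{F}}$. Your additional preliminary check that $\{F_{\varsigma}+a_{\varsigma}f\}_{\varsigma\in\mathfrak{A}}$ is itself a continuous frame (via the rank-one perturbation of the synthesis operator) is a sensible refinement, since Theorem \ref{t4} is stated for two frames while the paper only records the upper Bessel bound, but it does not change the substance of the argument.
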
 
\begin{proof} We have  $\{F_{\varsigma}+a_{\varsigma }f\}_{\varsigma \in\mathfrak{A}}$ is a Bessel sequence with the upper bound
	
	 $ (\sqrt{B_{F}}+\Vert \{a_{\varsigma}\}_{\varsigma\in\mathfrak{A}}\Vert\;\Vert f\Vert)^{2}$.
	And for any sequence $ \{\alpha_{\varsigma}\} _{\varsigma\in\mathfrak{A}}$ of scalars
	\begin{align*}
\Vert \int_{\mathfrak{A}} \alpha_{\varsigma}(F_{\varsigma}+a_{\varsigma}f-F_{\varsigma}) d\mu(\varsigma)\Vert&=\Vert 	\int_{\mathfrak{A}}  \alpha_{\varsigma} a_{\varsigma}f d\mu(\varsigma) \Vert\\
&\leq 		\int_{\mathfrak{A}} \vert \alpha_{\varsigma}\vert\;\vert a_{\varsigma}\vert \;\Vert f \Vert  d\mu(\varsigma)\\
&\leq (\int_{\mathfrak{A}} \vert \alpha_{\varsigma} \vert ^{2}d\mu(\varsigma))^{1/2} (\int_{\mathfrak{A}} \vert a_{\varsigma}\vert^{2}d\mu(\varsigma))^{1/2} \Vert f\Vert\\
&< \sqrt{bA_{F}}(\int_{\mathfrak{A}}\vert \alpha_{\varsigma}\vert ^{2}d\mu(\varsigma))^{1/2}.
	\end{align*}
The result follows by Theorem \ref{t4}.
\end{proof}


\section*{Declarations}

\medskip

\noindent \textbf{Availablity of data and materials}\newline
\noindent Not applicable.

\medskip

\noindent \textbf{Competing  interest}\newline
\noindent The authors declare that they have no competing interests.

\medskip

\noindent \textbf{Fundings}\newline
\noindent  Authors declare that there is no funding available for this article.

\medskip

\noindent \textbf{Authors' contributions}\newline
\noindent The authors equally conceived of the study, participated in its
design and coordination, drafted the manuscript, participated in the
sequence alignment, and read and approved the final manuscript. 

\medskip

	\bibliographystyle{amsplain}
	
\end{document}